\newcommand{\he}{\hat\epsilon}
\newcommand{\var}{\mathbf{Var}}
\newcommand{\iot}{\int_{0}^{t}}
\newcommand{\ist}{\int_{s}^{t}}
\newcommand{\ott}{[0,T]}
\newcommand{\ou}{[0,1]}
\newcommand{\id}{\mbox{id}}
\newcommand{\lot}{[\ell_1,\ell_2]}
\newcommand{\norm}[1]{\lVert #1\rVert}
\newcommand{\xd}{\mathbf{x}^{\mathbf{2}}}
\newcommand{\bb}{\mathbf{B}}
\newcommand{\be}{\mathbf{E}}
\newcommand{\bk}{\mathbf{k}}
\newcommand{\bp}{\mathbf{P}}
\newcommand{\bv}{\mathbf{V}}
\newcommand{\bx}{{\bf x}}
\newcommand{\xdt}{{\bf \tilde{x}^{2}}}
\newcommand{\der}{\delta}
\newcommand{\1}{{\bf 1}}
\newcommand{\2}{{\bf 2}}
\newcommand{\cZ}{{\mathcal Z}}
\def\EE{\mathbf{E}}
\def\RR{\mathbb{R}}
\def\de{{\delta}}
\def\De{{\Delta}}
\def\de{{\delta}}
\def\De{{\Delta}}
\def \eref#1{\hbox{(\ref{#1})}}
\newcommand{\D}{\mathbb D}
\newcommand{\R}{\mathbb R}
\newcommand{\N}{\mathbb N}
\newcommand{\cb}{\mathcal B}
\newcommand{\cac}{\mathcal C}
\newcommand{\ce}{\mathcal E}
\newcommand{\cf}{\mathcal F}
\newcommand{\ch}{\mathcal H}
\newcommand{\ci}{\mathcal I}
\newcommand{\cj}{\mathcal J}
\newcommand{\cl}{\mathcal L}
\newcommand{\cn}{\mathcal N}
\newcommand{\cq}{\mathcal Q}
\newcommand{\cs}{\mathcal S}
\newcommand{\cv}{\mathcal V}
\newcommand{\al}{\alpha}
\newcommand{\ep}{\varepsilon}
\newcommand{\ga}{\gamma}
\newcommand{\ka}{\kappa}
\newcommand{\la}{\lambda}
\newcommand{\laa}{\Lambda}
\newcommand{\oom}{\Omega}
\newcommand{\si}{\sigma}
\newcommand{\vp}{\varphi}
\newcommand{\lp}{\left(}
\newcommand{\rp}{\right)}
\newcommand{\lc}{\left[}
\newcommand{\rc}{\right]}
\newcommand{\lcl}{\left\{}
\newcommand{\rcl}{\right\}}
\newcommand{\lln}{\left|}
\newcommand{\rrn}{\right|}
\newcommand{\lla}{\left\langle}
\newcommand{\rra}{\right\rangle}
\newtheorem{theorem}{Theorem}[section]
\newtheorem{corollary}[theorem]{Corollary}
\newtheorem{definition}[theorem]{Definition}
\newtheorem{hypothesis}[theorem]{Hypothesis}
\newtheorem{lemma}[theorem]{Lemma}
\newtheorem{notation}[theorem]{Notation}
\newtheorem{proposition}[theorem]{Proposition}
\theoremstyle{remark}
\theoremstyle{remark}
\newtheorem{example}[theorem]{Example}
\newcommand{\bean}{\begin{eqnarray*}}
\newcommand{\eean}{\end{eqnarray*}}
\newcommand{\ben}{\begin{enumerate}}
\newcommand{\een}{\end{enumerate}}
\newcommand{\beq}{\begin{equation}}
\newcommand{\eeq}{\end{equation}}
\begin{document}

\title[Nilpotent RDEs]{Smooth density for some nilpotent \\ rough differential equations}

\date{\today}

\author{Yaozhong Hu  \and Samy Tindel}
\date{\today}
\begin{abstract}
In this note, we provide a non trivial example of differential equation driven by a fractional Brownian motion  with Hurst parameter $1/3<H<1/2$, whose solution admits a smooth density with respect to Lebesgue's measure. The result is obtained through the use of an explicit representation of the solution  when the vector fields of the equation are nilpotent, plus a Norris type lemma in the rough paths context.
\end{abstract}

\address{Yaozhong Hu, Department of Mathematics, University of Kansas, Lawrence, Kansas, 66045 USA.}
\email{hu@math.ku.edu}

\address{Samy Tindel, Institut {\'E}lie Cartan Nancy, Universit\'e de Nancy 1, B.P. 239,
54506 Vand{\oe}uvre-l{\`e}s-Nancy Cedex, France.}
\email{tindel@iecn.u-nancy.fr}

\thanks{S. Tindel is partially supported by the ANR grant ECRU}

\subjclass[2000]{60H07, 60H10, 65C30}
\date{\today}
\keywords{fractional Brownian motion, rough paths, Malliavin calculus, hypoellipticity}

\maketitle

\section{Introduction}

Let $B=(B^1,\ldots,B^d)$ be a $d$-dimensional fractional Brownian motion with Hurst parameter $1/3<H<1/2$, defined on a complete probability space $(\oom,\cf,\bp)$. Remind that this means that all the component $B^i$ of $B$ are independent centered Gaussian processes with covariance
\begin{equation}\label{eq:def-cov-fbm}
R_H(t,s):=\be\lc  B^i_t \, B^i_s \rc=\frac 12 (s^{2H}+t^{2H}-|t-s|^{2H}).
\end{equation}
In particular, the paths of $B$ are $\ga$-H\"older continuous for all $\ga \in (0,H)$.
This paper is concerned with a class of $\R^m$-valued stochastic differential equations driven by $B$, of the form
\begin{equation}\label{eq:sde-intro}
dy_t=\sum_{i=1}^{d} V_i(y_t) \, dB_t^i, \quad t\in\ott, \quad y_0=a,
\end{equation}
where $T>0$ is a fixed time horizon, $a\in\R^m$ stands for a given initial condition and $(V_1,\ldots,V_d)$ is a family of smooth vector fields of $\R^m$.

\smallskip

Stochastic differential systems driven by fractional Brownian motion have been the object of intensive studies during the past decade, both for their theoretical interest and for the wide range of application they open, covering for instance  finance \cite{Gua,WTT} or  biophysics \cite{KS,OTHP} situations. The first aim in the theory has thus been to settle some reasonable tools allowing to solve equations of type (\ref{eq:sde-intro}). This has been achieved, when the Hurst parameter $H$ of the underlying fBm is $>1/2$, thanks to methods of fractional integration \cite{NR,Za}, or simply by means of Young type integration (see e.g \cite{Gu}). When one moves to more irregular cases, namely $H<1/2$, the standard method by now in order to solve equations like (\ref{eq:sde-intro}) relies on rough paths considerations, as explained for instance in~ \cite{FV,Gu,LQ}.

\smallskip

A second natural step in the study of fractional differential systems consists in establishing some properties about their probability law. Some substitute for the semigroup property governing $\cl(y_t)$ in the Markovian case (namely when $H=1/2$) have been given in \cite{BC,NNRT}, in terms of asymptotic expansions in a neighborhood of $t=0$. nSome considerable efforts have also been 
made  in order to analyze the density of $\cl(y_t)$ with respect to Lebesgue measure. To that respect, in the regular case $H>1/2$ the situation is rather clear: the existence of a density is shown in \cite{NS} under some standard nondegeneracy conditions, the smoothness of the density is established in \cite{HN} under elliptic conditions on the coefficients, and this result is extended to the hypoelliptic case in \cite{BH}. In all, this set of results replicates what has been obtained for the usual Brownian motion, at the price of huge technical complications.

\smallskip

In the irregular case $H<1/2$, the picture is far from being so complete. Indeed, the existence part of the density results have been thoroughly studied under elliptic and H\"ormander conditions (see \cite{CF,FV} for a complete review). However, when one wishes to establish the smoothness of the density, some strong moment assumptions on the inverse of the Malliavin derivative of $y_t$ are usually required. These moment estimates are still an important open question in the field, as well as the smoothness of density for random variables like $y_t$.

\smallskip

The current paper proposes to make a step in this direction, and we wish to prove that $\cl(y_t)$ can be decomposed as $p_t(z) \, dz$ for a smooth function $p_t$ in some special non trivial examples of equation (\ref{eq:sde-intro}). Namely, we will handle in the sequel the case of nilpotent vector fields $V_1,\ldots,V_d$ (see Hypothesis \ref{hyp:nilpotent-V} for a precise description), and in this context we shall derive the following density result:
\begin{theorem}\label{thm:smooth-density}
Suppose that the vector fields $V_i$, $1=1, 2, \ldots, d$ are smooth with all derivatives bounded, and that they are $n$-nilpotent in the sense that their Lie brackets of order $n$ vanish for some positive integer $n$.  We also assume that $V_1,\ldots,V_d$ satisfy H\"ormander's hypoelliptic condition (their Lie brackets generate $\R^m$ at any point $x\in\R^m$), and that all the Lie brackets of order greater or equal to $2$ are  constant.  Then for all $t>0$ the probability law of 
the random variable $y_t$, defined by  (\ref{eq:sde-intro})   admits a smooth density with respect to Lebesgue measure.
\end{theorem}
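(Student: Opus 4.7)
The plan is to follow the standard Malliavin calculus criterion: verify that $y_t \in \D^\infty$ and that the Malliavin covariance matrix $\gamma_{y_t}$ is invertible with $\det\gamma_{y_t}^{-1}$ admitting moments of all orders. The structural hypotheses (nilpotency, constancy of the higher brackets) are precisely what make this possible in the rough regime $H<1/2$.

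First, I would exploit the nilpotency and the constancy of brackets of order $\geq 2$. Under these assumptions, the Chen--Strichartz (Magnus) expansion of the flow of (\ref{eq:sde-intro}) terminates at level $n-1$, yielding a polynomial representation
\begin{equation*}
y_t = a + \sum_{k=1}^{n-1} \sum_{i_1,\ldots,i_k} c_{i_1\cdots i_k}\, \bb^{i_1\cdots i_k}_t,
\end{equation*}
where $c_{i_1\cdots i_k}\in\R^m$ is a fixed linear combination of iterated Lie brackets of the $V_i$ evaluated at $a$, and
\begin{equation*}
\bb^{i_1\cdots i_k}_t = \int_{0<s_1<\cdots<s_k<t} dB^{i_1}_{s_1}\cdots dB^{i_k}_{s_k}
\end{equation*}
is the rough-path iterated integral. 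This is the only place the nilpotency is used, and it reduces the problem to analyzing a fixed polynomial in these Gaussian rough-path components.

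Second, each $\bb^{i_1\cdots i_k}_t$ belongs to $\D^\infty$ with all Malliavin derivatives having moments of every order, so $y_t\in\D^\infty$. Differentiating the representation, for any unit vector $v\in\R^m$ one gets
\begin{equation*}
\langle v, Dy_t\rangle_{\R^m} = \sum_{k,\, i_1,\ldots,i_k} \langle v, c_{i_1\cdots i_k}\rangle\, D\bb^{i_1\cdots i_k}_t.
\end{equation*}
By H\"ormander's condition at $a$, which is equivalent to the condition at every point because higher brackets are constant, the vectors $c_{i_1\cdots i_k}$ span $\R^m$; hence for every unit $v$ at least one coefficient $\langle v, c_{i_1\cdots i_k}\rangle$ is non-zero, and the linear combination on the right is non-trivial.

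Third, and most delicate, one must quantify this non-triviality in order to reach $\det\gamma_{y_t}^{-1}\in\cap_p L^p$. This is where a rough-path Norris-type lemma enters: such a lemma should ensure that any non-trivial linear combination $\sum_\alpha \la_\alpha \bb^\alpha$ of iterated integrals of the fBm rough path cannot be simultaneously small in the appropriate Cameron--Martin norm unless all the $\la_\alpha$ are comparably small. Once the polynomial small-ball estimate
\begin{equation*}
\bp\!\left( \inf_{|v|=1}\langle v, \gamma_{y_t} v\rangle < \ep\right) \leq C_p\, \ep^p
\end{equation*}
holds for every $p\geq 1$, the classical Malliavin criterion yields the smoothness of the density of $y_t$. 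The main obstacle is precisely this Norris estimate: for standard Brownian motion it follows from martingale inequalities, whereas for $H<1/2$ one has no martingale structure and must argue directly at the rough-path level, which is the technical heart of the paper.
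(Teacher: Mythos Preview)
Your polynomial representation of $y_t$ is incorrect under the stated hypotheses, and this is a genuine gap. The Strichartz formula (Proposition~\ref{prop:strichartz} in the paper) gives $y_t=[\exp(Z_t)](a)$, where $Z_t$ is indeed a finite linear combination of Lie brackets with iterated-integral coefficients; but $\exp$ here denotes the time-$1$ flow of the vector field $Z_t$, not an algebraic exponential, and this flow is in general \emph{not} a polynomial in the iterated integrals $\bb^{i_1\cdots i_k}_t$. For a concrete counterexample within the hypotheses, take $m=d=1$ and $V_1(x)=2+\sin x$: this field is smooth with bounded derivatives, trivially $2$-nilpotent, has all brackets of order $\ge 2$ equal to zero (hence constant), and spans $\R$ at every point; yet the solution of $dy_t=(2+\sin y_t)\,dB_t$ is certainly not a polynomial in $B_t$. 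The paper's own estimates in Proposition~\ref{prop:moments-y-t-nilpotent} and Theorem~\ref{t.4.4} carry factors of the form $\exp\{c\sum_i\sup_t|B^i_t|\}$, reflecting precisely this non-polynomial dependence.

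Because of this, your subsequent steps collapse: the coefficients you call $c_{i_1\cdots i_k}$ are not constants, and $Dy_t$ is not a fixed linear combination of the $D\bb^{i_1\cdots i_k}_t$. The paper avoids this problem entirely by never expanding $y_t$ in chaos. Instead it works with the processes $Z^U_t=\langle J_{0,t}^{-1}U(y_t),\eta\rangle$, which satisfy the closed relation
\[
Z^U_t=\langle\eta,U(a)\rangle+\sum_{j=1}^d\int_0^t Z^{[U,V_j]}_s\,dB^j_s.
\]
The nilpotency and constancy of higher brackets are used only to obtain $L^q$ bounds on $y_t$, $Dy_t$, $J_{0,t}$ and $J_{0,t}^{-1}$ (Proposition~\ref{prop:moments-y-t-nilpotent}, Theorems~\ref{t.4.4} and~\ref{thm:moments-jacobian}, Corollary~\ref{cor:bnd-Dy-J-Z-U}); once these hold, the rough-path Norris lemma (Proposition~\ref{prop:norris-lemma}) --- which concerns controlled processes and their rough integrals, not linear combinations of iterated integrals as you suggest --- is iterated along the chain $V_k\to[V_k,V_j]\to\cdots$ exactly as in the classical H\"ormander argument.
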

Notice that the hypoelliptic assumption is quite natural in our context. Indeed, it would certainly be too restrictive to consider a family of vector fields $V_1,\ldots,V_d$ being nilpotent and elliptic at the same time. Moreover, some interesting examples of equations satisfying our standing assumptions will be given below. It should be stressed  however that the basic aim of this article is to prove that smoothness of density results can be obtained for rough differential equations driven by a fractional Brownian motion in some specific situations, even if the general hypoelliptic case is still an important open problem. We refer to \cite{BOT} for another case, based on skew-symmetric properties, where a similar theorem holds true.

\smallskip

In order to prove Theorem \ref{thm:smooth-density}, two main ingredients have to be highlighted:

\smallskip

\noindent
\emph{(i)} Working under the nilpotent assumptions described above enables to use a Strichartz type representation for the solution to our equation, given in terms of a finite chaos expansion. This allows to derive some bounds for the moments of both $y_t$ and its Malliavin derivative, which is the main missing tool on the way to smoothness of density results for rough differential equations in the general case.

\smallskip

\noindent
\emph{(ii)} With the integrability of Malliavin derivative in hand, we shall follow the standard probabilistic way to prove smoothness of density under H\"ormander's conditions, for which we refer to \cite{Ha11,Ma,No}. To this purpose, the second main ingredient is a Norris type lemma, which has to be extended (in the rough path context) to \emph{controlled processes}. It should be mentioned at this point that a similar result has been proven recently (an independently) in \cite{HP}.

\smallskip

\noindent
These two ingredients will be developed in the remainder of the article.

\smallskip

Here is how our article is structured: Some preliminaries on rough differential equations and fractional Brownian motion are given in Section \ref{sec:alg-integration}. Section \ref{sec:norris-lemma} is devoted to the proof of our Norris type lemma for controlled processes in the sense of \cite{Gu}. Finally, Malliavin calculus tools and their application to density results for the random variable $y_t$ are presented at Section \ref{sec:malliavin-calculus}.

\smallskip

\noindent
\textbf{Notation:} In the remainder of the article, $c,c_1,c_2$ will stand for generic positive constants which may change from line to line. We also write $a\lesssim b$ (resp. $a\asymp b$) when $a\le c \, b$ (resp. $a=c\, b$) for a universal constant $c$.

\section{Rough differential equations and fractional Brownian motion}
\label{sec:alg-integration}

Generalized integrals will be needed in the sequel in order to define and solve equations of the form (\ref{eq:sde-intro}), and also to get an equivalent of Norris lemma in our context. Though all those elements might be obtained within the landmark of usual rough paths setting \cite{FV,LQ} we have chosen here to work with the algebraic integration framework, which (from our point of view) is more amenable to handy calculations.

\smallskip

In this section, we recall thus the main concepts of algebraic integration. Namely, we state the definition of the spaces of increments, of the operator $\delta$, and its inverse called $\Lambda$ (or sewing map). We also recall some elementary but useful algebraic relations on the spaces of increments. The interested reader is sent to \cite{Gu} for a complete account on the topic, or to \cite{DT,GT} for a more detailed summary.

\subsection{Increments}\label{sec:incr}

The extended integral we deal
with is based on the notion of increments, together with an
elementary operator $\der$ acting on them.

The notion of increment can be introduced in the following way:  for two arbitrary real numbers
$\ell_2>\ell_1\ge 0$, a vector space $V$, and an integer $k\ge 1$, we denote by
$\cac_k( [\ell_1,\ell_2]; V)$ the set of continuous functions $g : [\ell_1,\ell_2]^{k} \to V$ such
that $g_{t_1 \cdots t_{k}} = 0$
whenever $t_i = t_{i+1}$ for some $i\in \{0, \ldots,  k-1\}$.
Such a function will be called a
\emph{$(k-1)$-increment}, and we will
set $\cac_*( [\ell_1,\ell_2];V)=\cup_{k\ge 1}\cac_k( [\ell_1,\ell_2]; V)$.
To simplify the notation, we will write $\cac_k(V)$, if there is no ambiguity about $ [\ell_1,\ell_2] $.

The operator $\der$
is  an operator acting on
$k$-increments,
and is defined as follows on $\cac_k(V)$:
\begin{equation}
  \label{eq:coboundary}
\delta : \cac_k(V) \to \cac_{k+1}(V), \qquad
(\delta g)_{t_1 \cdots t_{k+1}} = \sum_{i=1}^{k+1} (-1)^i g_{t_1
  \cdots \hat t_i \cdots t_{k+1}} ,
\end{equation}
where $\hat t_i$ means that this particular argument is omitted.
Then a fundamental property of $\der$, which is easily verified,
is that
$\delta \delta = 0$, where $\delta \delta$ is considered as an operator
from $\cac_k(V)$ to $\cac_{k+2}(V)$.
 We will denote $\cZ\cac_k(V) = \cac_k(V) \cap \text{Ker}\delta$
and $\cb \cac_k(V) =\cac_k(V) \cap \text{Im}\delta$.

\smallskip

Some simple examples of actions of $\der$,
which will be the ones we will really use throughout the article,
 are obtained by letting
$g\in\cac_1(V)$ and $h\in\cac_2(V)$. Then, for any $t,u,s\in\lot$, we have
\begin{equation}
\label{eq:simple_application}
  (\der g)_{st} = g_t - g_s
\quad\mbox{ and }\quad
(\der h)_{sut} = h_{st}-h_{su}-h_{ut}.
\end{equation}

\smallskip

Our future discussions will mainly rely on
$k$-increments with $k =2$ or $k=3$, for which we will use some
analytical assumptions. Namely,
we measure the size of these increments by H\"older norms
defined in the following way: for $f \in \cac_2(V)$ let
\begin{equation}\label{eq:def-holder-C2}
\norm{f}_{\mu} = \sup_{s,t\in\lot}\frac{|f_{st}|}{|t-s|^\mu}
\quad\mbox{and}\quad
\cac_2^\mu(V)=\lcl f \in \cac_2(V);\, \norm{f}_{\mu}<\infty  \rcl.
\end{equation}
Using this notation, we define in a natural way $\cac_1^\mu(V)=\{ f \in \cac_1(V); \, \norm{\der f}_\mu < \infty \}$. In the sequel, we also handle norms including supremums, of the form
\begin{equation}\label{eq:def-holder-sup}
\|f\|_{\mu,\infty}= \|f\|_{\mu} + \|f\|_{\infty}, 
\quad\mbox{and}\quad
\cac_1^{\mu,0}(V)=\{ f \in \cac_1(V); \,  \|f\|_{\mu,\infty}  < \infty \}.
\end{equation}
In the same way, for $h \in \cac_3(V)$ we set
\begin{eqnarray}
  \label{eq:normOCC2}
  \norm{h}_{\gamma,\rho} &=& \sup_{s,u,t\in\lot}
\frac{|h_{sut}|}{|u-s|^\gamma |t-u|^\rho},\\
\norm{h}_\mu &= &
\inf\left \{\sum_i \norm{h_i}_{\rho_i,\mu-\rho_i} ;\, h  =\sum_i h_i,\, 0 < \rho_i < \mu \right\} ,\nonumber
\end{eqnarray}
where the last infimum is taken over all sequences $\{h_i, \, i \in \mathbb{N}\} \subset \cac_3(V) $ such that $h
= \sum_i h_i$ and over all choices of the numbers $\rho_i \in (0,\mu)$.
Then  $\norm{\cdot}_\mu$ is easily seen to be a norm on $\cac_3(V)$, and we define
$$
\cac_3^\mu(V):=\lcl h\in\cac_3(V);\, \norm{h}_\mu<\infty \rcl.
$$
Eventually,
let $\cac_3^{1+}(V) = \cup_{\mu > 1} \cac_3^\mu(V)$,
and note  that the same kind of norms can be considered on the
spaces $\cZ \cac_3(V)$, leading to the definition of the  spaces
$\cZ \cac_3^\mu(V)$ and $\cZ \cac_3^{1+}(V)$. In order to avoid ambiguities, we sometimes denote  in the following  by $\cn[  \, \cdot \,  ; \cac_j^\kappa]$ the $\kappa$-H\"older norm on the space $\cac_j$, for $j=1,2,3$. For $\zeta\in\cac_j(V)$, we also set $\mathcal{N}[\zeta;\mathcal{C}_{j}^{0}(V)]=\sup_{s\in[\ell_1; \ell_2]^j}\lVert \zeta_s\rVert_{V}$.

\smallskip

The invertibility of $\der$ under H\"older regularity conditions is an essential tool for the construction of our generalized integrals, and can be summarized as follows:

\begin{theorem}[The sewing map] \label{prop:Lambda}
Let $\mu >1$. For any $h\in \cZ \cac_3^\mu( V)$, there exists a unique $\Lambda h \in \cac_2^\mu(V)$ such that $\der( \Lambda h )=h$. Furthermore,
\begin{eqnarray} \label{contraction}
\norm{ \Lambda h}_\mu \leq \frac{1}{2-2^{\mu}}\, \cn[h;\, \cac_3^{\mu}(V)].
\end{eqnarray}
 This gives rise to a  continuous linear map $\laa:  \cZ \cac_3^\mu( V) \rightarrow \cac_2^\mu(V)$ such that $\der \laa =\id_{ \cZ \cac_3^\mu( V)}$.
\end{theorem}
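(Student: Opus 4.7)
The plan is to establish Theorem \ref{prop:Lambda} in two stages, addressing uniqueness first (which is essentially algebraic) and then existence (for which we use a dyadic refinement scheme in the spirit of Young's construction of the integral).

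For uniqueness, suppose $g_1, g_2 \in \cac_2^\mu(V)$ both satisfy $\delta g_i = h$. Then the difference $d := g_1 - g_2$ lies in $\cac_2^\mu(V) \cap \text{Ker}\,\delta = \cZ\cac_2^\mu(V)$, so for all $0\le s \le u \le t$, $d_{st} = d_{su} + d_{ut}$. Setting $f_t := d_{0t}$ one checks $d = \delta f$, and the bound $|(\delta f)_{st}| \le \norm{d}_\mu |t-s|^\mu$ with $\mu > 1$ forces $f$ to be constant (any $\mu$-H\"older function on an interval with exponent larger than $1$ is constant, since its difference quotients tend to zero). Hence $d = 0$ and $\Lambda h$ is unique when it exists; linearity of $\Lambda$ on $\cZ\cac_3^\mu(V)$ is then an immediate consequence.

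For existence, fix $s < t$ and introduce dyadic partitions $t_k^n := s + k(t-s)/2^n$ for $k = 0, \ldots, 2^n$. Starting from $m_0(s,t) := 0$, we build a sequence $(m_n(s,t))_{n \ge 0}$ by successive refinements: each step refines the partition by inserting the dyadic midpoints, using $h$ to correct, so that the differences $m_{n+1}(s,t) - m_n(s,t)$ are sums of $h$-values on triples of consecutive dyadic times of level $n$. Applying the bound $|h_{r v w}| \le \norm{h}_\mu |w-r|^\mu$ to each of the $2^n$ such corrections, one obtains the estimate
\begin{equation*}
|m_{n+1}(s,t) - m_n(s,t)| \,\le\, 2^n \cdot \norm{h}_\mu \cdot (|t-s|/2^n)^\mu \,=\, 2^{n(1-\mu)} |t-s|^\mu \, \norm{h}_\mu.
\end{equation*}
Since $\mu > 1$, summing over $n$ yields both the convergence of $m_n(s,t)$ to a candidate $(\Lambda h)_{st}$ and an explicit geometric series bound of the form claimed in (\ref{contraction}).

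The main technical obstacle is to verify that this pointwise limit actually satisfies $\delta(\Lambda h) = h$ for arbitrary triples $(s,u,t)$, not merely on aligned dyadic nodes. This step relies crucially on the cocycle condition $\delta h = 0$: one compares the refinements of the two partitions $[s,u] \cup [u,t]$ and $[s,t]$, uses $\delta h = 0$ to telescope the mismatched $h$-corrections, and passes to the limit to produce $(\Lambda h)_{st} - (\Lambda h)_{su} - (\Lambda h)_{ut} = h_{sut}$. Once this cocycle identity is established, the continuity of the map $\Lambda \colon \cZ\cac_3^\mu(V) \to \cac_2^\mu(V)$ follows from its linearity together with the bound (\ref{contraction}).
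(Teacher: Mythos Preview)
The paper does not actually prove this theorem; its proof consists only of citations to \cite{Gu} for the original argument and to \cite{DT,GT} for simplified versions. Your sketch is precisely the standard dyadic construction found in those references, so in that sense you are in agreement with what the paper invokes.

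Two remarks on the details. First, your uniqueness argument is correct once you replace the base point $0$ by the left endpoint $\ell_1$ of the working interval. Second, the geometric series you obtain sums to $(1-2^{1-\mu})^{-1}$, which is a factor $2^{\mu}$ larger than the constant $(2^{\mu}-2)^{-1}$ appearing in the statement (note the sign typo in \eqref{contraction}); the sharper constant in \cite{Gu} comes from a point-removal scheme over general partitions rather than pure dyadic bisection, but this only affects the value of the constant, not the validity of the bound.

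The place where your sketch is thinnest is exactly the step you flag: verifying $\delta(\Lambda h)_{sut}=h_{sut}$ for an \emph{arbitrary} intermediate point $u$. The dyadic partitions of $[s,t]$, $[s,u]$ and $[u,t]$ are mutually incompatible (they have different meshes and $u$ is typically not a dyadic node of $[s,t]$), so the comparison is not automatic. Your outline---pass to a common refinement and telescope the resulting $h$-corrections using the cocycle relation $\delta h=0$---is the correct strategy, and is what the cited proofs do; in a complete write-up this step would have to be carried out in full, as it is the only place where the hypothesis $h\in\cZ\cac_3^\mu$ (as opposed to merely $h\in\cac_3^\mu$) is used.
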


\begin{proof}
The original proof of this result can be found in \cite{Gu}. We refer to \cite{DT,GT} for two simplified versions.

\end{proof}

The sewing map creates a first link between the structures we just introduced and the problem of integration of irregular functions:

\begin{corollary}[Integration of small increments]
\label{cor:integration}
For any 1-increment $g\in\cac_2 (V)$ such that $\der g\in\cac_3^{1+}$,
set
$
h = (\id-\Lambda \delta) g
$.
Then, there exists $f \in \cac_1(V)$ such that $h=\der f$ and
$$
\delta f_{st} = \lim_{|\Pi_{st}| \to 0} \sum_{i=0}^n g_{t_{i} t_{i+1}},
$$
where the limit is over any partition $\Pi_{st} = \{t_0=s,\dots,
t_n=t\}$ of $[s,t]$ whose mesh tends to zero. The
1-increment $\delta f$ is the indefinite integral of the 1-increment $g$.
\end{corollary}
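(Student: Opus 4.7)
The plan is to argue that $h:=(\id-\Lambda\delta)g$ is a coboundary in $\cac_2(V)$, which produces the desired $f\in\cac_1(V)$ with $h=\delta f$, and then to verify that the Riemann sums $\sum_i g_{t_i t_{i+1}}$ converge to $\delta f_{st}$.

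First I would show $h\in\cb\cac_2(V)$. Since $\delta\delta g=0$ automatically and $\delta g\in\cac_3^{1+}(V)$ by hypothesis, the 2-increment $\delta g$ lies in $\cZ\cac_3^{1+}(V)$, and Theorem \ref{prop:Lambda} yields $\delta\Lambda(\delta g)=\delta g$. Hence
$$
\delta h = \delta g - \delta\Lambda\delta g = 0,
$$
so $h\in\cZ\cac_2(V)$. Now any 1-increment killed by $\delta$ is a coboundary: fixing the left endpoint $\ell_1$ and setting $f_t:=h_{\ell_1 t}$, the relation $(\delta h)_{\ell_1 s t}=0$ rewrites as $h_{st}=f_t-f_s=(\delta f)_{st}$, producing the required $f\in\cac_1(V)$.

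Next, I would establish the Riemann-sum formula. Decomposing $g=h+\Lambda\delta g=\delta f+\Lambda\delta g$ and evaluating on a partition $\Pi_{st}=\{s=t_0<\cdots<t_n=t\}$ gives
$$
\sum_{i=0}^{n-1} g_{t_i t_{i+1}} = \sum_{i=0}^{n-1} (\delta f)_{t_i t_{i+1}} + \sum_{i=0}^{n-1} (\Lambda\delta g)_{t_i t_{i+1}}.
$$
The first sum telescopes to $f_t-f_s=(\delta f)_{st}$. For the second, $\Lambda\delta g$ lies in $\cac_2^\mu(V)$ for some $\mu>1$ inherited from $\delta g\in\cac_3^{1+}(V)$, so each term is bounded by $\norm{\Lambda\delta g}_\mu |t_{i+1}-t_i|^\mu$. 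Summing and using $\sum_i |t_{i+1}-t_i|^\mu \le |\Pi_{st}|^{\mu-1}(t-s)$ shows that this remainder vanishes as $|\Pi_{st}|\to 0$, yielding the announced identity.

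The statement is essentially a direct corollary of the sewing map theorem, so there is no serious obstacle beyond the bookkeeping above; the key point is simply that the H\"older exponent of $\Lambda\delta g$ is $>1$, which is exactly what forces the error term in the Riemann sum to be negligible in the mesh limit.
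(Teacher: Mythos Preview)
Your argument is correct and is the standard way to derive this corollary from the sewing map. The paper itself does not give a proof of this statement: it is stated as a direct consequence of Theorem~\ref{prop:Lambda} and then used without further justification, so there is nothing to compare against beyond noting that your reasoning is exactly the intended one.
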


\smallskip

We also need some product rules for the operator $\delta$. For this
recall the following convention:
for  $g\in\cac_n( \lot \R^{l,d})$ and $h\in\cac_m( \lot ;\R^{d,p}) $ let  $gh$
be the element of $\cac_{n+m-1}( \lot ;\R^{l,p})$ defined by
\begin{equation}\label{cvpdt}
(gh)_{t_1,\dots,t_{m+n-1}}=g_{t_1,\dots,t_{n}} h_{t_{n},\dots,t_{m+n-1}}
\end{equation}
for $t_1,\dots,t_{m+n-1}\in \lot$. With this notation, the following elementary rule holds true:

\begin{proposition}\label{difrul}
Let $g\in\cac_2 (\lot;\R^{l,d})$ and $h\in\cac_1(\lot;\R^d)$. Then
$gh$ is an element of $\cac_2( \lot ;\R^l)$ and
$\der (gh) = \der g\, h  - g \,\der h.$
\end{proposition}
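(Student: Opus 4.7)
The statement is a purely algebraic identity on increments, containing no analytic content, so the plan is to verify it by direct expansion. The strategy has two pieces: first check that $gh$ really belongs to $\mathcal{C}_2$, and then expand both sides of the claimed product rule using \eqref{eq:simple_application} and the convention \eqref{cvpdt}, showing they are equal term by term.

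First I would use the convention \eqref{cvpdt} with $n=2$, $m=1$ to write explicitly
\begin{equation*}
(gh)_{st} = g_{st}\, h_t , \qquad s,t\in\lot.
\end{equation*}
Continuity is inherited from $g$ and $h$, and the defining property of $\mathcal{C}_2$ — vanishing on the diagonal — is immediate since $g_{tt}=0$ forces $(gh)_{tt}=0$. This establishes $gh\in\mathcal{C}_2(\lot;\R^l)$.

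For the Leibniz-type identity, I would apply the formula $(\delta F)_{sut}=F_{st}-F_{su}-F_{ut}$ from \eqref{eq:simple_application} to $F=gh$ to get
\begin{equation*}
(\delta(gh))_{sut} = g_{st}h_t - g_{su}h_u - g_{ut}h_t .
\end{equation*}
For the right-hand side, \eqref{cvpdt} with $\delta g\in\mathcal{C}_3$, $h\in\mathcal{C}_1$ gives $(\delta g\cdot h)_{sut}=(\delta g)_{sut}h_{t}$, and with $g\in\mathcal{C}_2$, $\delta h\in\mathcal{C}_2$ gives $(g\cdot \delta h)_{sut}=g_{su}(\delta h)_{ut}=g_{su}(h_t-h_u)$. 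Expanding and collecting terms, the combination $\delta g\cdot h - g\cdot \delta h$ then reduces to the same three monomials $g_{st}h_t$, $g_{su}h_u$, $g_{ut}h_t$, proving the identity.

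There is essentially no obstacle here: the only care required is the bookkeeping of which index of the product goes to $g$ and which to $h$ under \eqref{cvpdt}, and tracking the signs coming from the simplicial coboundary. Conceptually, the identity is the standard graded Leibniz rule for the \v{C}ech-type differential $\delta$, the minus sign in front of $g\,\delta h$ reflecting the fact that $g\in\mathcal{C}_2$ has simplicial degree one.
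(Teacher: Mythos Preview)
The paper gives no proof of this proposition; it is stated as an elementary rule, so there is no argument to compare against. Your strategy---direct expansion via the product convention (\ref{cvpdt}) and the formulae in (\ref{eq:simple_application})---is the right and essentially the only one.

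There is, however, a genuine sign issue that your last paragraph glosses over. With the convention $(\delta h)_{ut}=h_t-h_u$ that you explicitly invoke from (\ref{eq:simple_application}), the expansion reads
\[
(\delta g)_{sut}\,h_t - g_{su}(\delta h)_{ut}
= (g_{st}-g_{su}-g_{ut})h_t - g_{su}(h_t-h_u)
= g_{st}h_t - 2g_{su}h_t - g_{ut}h_t + g_{su}h_u,
\]
which does \emph{not} equal $(\delta(gh))_{sut}=g_{st}h_t - g_{su}h_u - g_{ut}h_t$. What does match the left-hand side is $\delta g\,h + g\,\delta h$. The minus sign in the proposition is correct only under the sign convention of the general coboundary (\ref{eq:coboundary}), which for $h\in\mathcal C_1$ yields $(\delta h)_{ut}=h_u-h_t$, opposite to the $k=1$ case displayed in (\ref{eq:simple_application}); for $k=2$ the two formulae happen to agree, so $(\delta g)_{sut}$ is unambiguous. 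In short, the paper's conventions are internally inconsistent at $k=1$, and your proof is incomplete precisely because you assert the cancellation without performing it: had you expanded, you would have seen that either the sign in the stated identity or the $k=1$ convention in (\ref{eq:simple_application}) must be adjusted for the computation to close.
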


\smallskip

\subsection{Random differential equations}\label{sec:rdes}
One of the main appeals of the algebraic integration theory is that
differential equations driven by   a $\ga$-H\"older signal $x$ can be defined and solved rather quickly in this setting. In the case of an H\"older exponent $\ga>1/3$, the required structures  are just the notion of \textit{controlled processes} and the L\'evy area based on $x$.

\smallskip

Indeed, recall that we wish to consider an equation of the form
\begin{align} \label{de}
dy_t=\sum_{i=1}^{d}V_i(y_t) \, dx_t^{i},
\quad t \in \lc 0, T \rc, \qquad  y_0=a,
\end{align}
where $a$ is a given initial condition in $\R^m$, $x$ is an element of $\cac_1^{\ga}([0,T];\, \R^d)$, and $(V_1,\ldots,V_d)$ is a family of smooth vector fields of $\R^m$. Then it is  natural that the increments of a candidate  for a solution to (\ref{de}) should be controlled by the increments of $x$ in the following way:
\begin{definition}
\label{def:controlled-process}
Let $z$ be a path in $\cac_1^\ka(\R^m)$ with $1/3<\ka\le\ga$, and set $\der x:=\bx^{\1}$.
We say that $z$ is a weakly controlled path based on $x$ if
$z_0=a$ with $a\in\R^{m}$,
and $\der z\in\cac_2^\ka(\R^{m})$ has a decomposition $\der z=\zeta \bx^{1}+ r$,
that is, for any $s, t\in[0,T]$,
\begin{equation}
\label{f3.0.4}
\der z_{st}=\zeta_s^{j} \, \bx^{\1,j}_{st} + r_{st},
\end{equation}
where we have used the summation over repeated indices convention, and
with $\zeta^{1},\ldots,\zeta^{d}\in\cac_1^\ka(\R^{m})$, as well as $r\in\cac_2^{2\ka}(\R^{m})$.
\end{definition}

\noindent
The space of weakly controlled
paths will be denoted by $\cq^x_{\ka,a}(\R^{m})$, and a process
$z\in\cq^x_{\ka,a}(\R^{m})$ can be considered in fact as a couple
$(z,\zeta)$.  The space $\cq^x_{\ka,a}(\R^{m})$ is endowed with a natural semi-norm given by
\begin{equation}\label{f3.0.5}
\cn[z;\cq^x_{\ka,a}(\R^{m})]   = \cn[z;\cac_1^{\ka}(\R^{m})]
+ \sum_{j=1}^{d} \cn[\zeta^{j};\cac_1^{\ka,0}(\R^{m})] 
+\cn[r;\cac_2^{2\ka}(\R^{m})],
\end{equation}
where the quantities $\cn[g;\cac_j^{\ka}]$ have been defined in Section \ref{sec:incr}.
For the L\'evy area associated to $x$ we assume the following structure:
\begin{hypothesis}\label{h1}
The path $x:[0,T]\to\R^{d}$ is
$\ga$-H\"older continuous with $\frac{1}{3}<\ga\le 1$ and  admits a so-called L\'evy area,
that is, a process
$\xd\in\cac_2^{2\ga}(\R^{d,d})$, which  satisfies
$\der\xd=\bx^{\1}\otimes \bx^{\1}$, namely
\begin{equation*}
\der\bx^{\2,ij}_{sut} =[\bx^{\1,i}]_{su} [\bx^{\1,j}]_{ut},
\end{equation*}
for any $s,u,t\in[0,T]$ and $i,j\in\{1,\ldots,d  \}$.
\end{hypothesis}

\smallskip

To illustrate the idea behind the construction of the generalized  integral
assume that the paths $x$ and $z$ are smooth  and also for simplicity that $d=m=1$. Then the Riemann-Stieltjes integral of $z$ with respect to $x$ is well defined and
we have
$$
\ist z_udx_u = z_s(x_t-x_s) + \ist(z_u-z_s)dx_u  = z_s \bx^{\1}_{st} + \ist (\der z)_{su}dx_u
$$
for $\ell_1 \leq s\le t \leq \ell_2 $.   If $z$ admits the decomposition (\ref{f3.0.4})
we
obtain
\begin{align}
\ist (\der z)_{su}dx_u  =
\ist \left( \zeta_{s} \bx^{\1}_{su}  +\rho_{su} \right)  dx_u
=  \zeta_{s} \ist \bx^{\1}_{su}  \, dx_u + \ist \rho_{su}  \, dx_u.
\label{heuristic}
\end{align}
Moreover, if we set
$$\xd_{st}:= \ist \bx^{\1}_{su}  \, dx_u, \qquad \ell_1 \leq s\le t \leq \ell_2,$$
then it is quickly verified that $\xd$ is the  L\'evy area associated to $x$. Hence we can write
$$
\ist  z_udx_u
=  z_s \bx^{\1}_{sz} +  \zeta_{s}\,  \xd_{st}  + \ist \rho_{su}  \, dx_u.
$$
 Now recast this equation as
\begin{align}
  \ist \rho_{su}  \, dx_u = \ist  z_u \, dx_u  - z_s \bx^{\1}_{st} -  \zeta_{s}\,  \xd_{st},
\label{exp1:irhodx}
\end{align}
and apply  the increment operator  $\der$ to both  sides of this equation.
For smooth paths $z$ and $x$
we have
$$
\der \left (\int z \, dx \right)=0,
\qquad \qquad
\der(z \,\bx^{\1})= - \der z \, \bx^{\1},
$$
by Proposition \ref{difrul} (recall also our convention (\ref{cvpdt}) on products of increments).
Hence applying these relations to the right hand side of
(\ref{exp1:irhodx}), using the decomposition (\ref{f3.0.4}), the properties of the L\'evy area and
again Proposition \ref{difrul},
we obtain
\begin{align*}
 \left[\der \left( \int \rho \, dx \right) \right]_{sut}  &= \,
\der z_{su} \bx^{\1}_{ut}
+ \der\zeta_{su} \, \xd_{ut}
-\zeta_{s}  \, \delta \xd _{sut} \\
&= \zeta_s   \bx^{\1}_{su}   \, \bx^{\1}_{ut}
+\rho_{su} \, \bx^{\1}_{ut}
+ \der\zeta_{su} \xd_{ut}
-\zeta_{s}  \bx^{\1}_{su}   \, \bx^{\1}_{ut}
\\
&=\rho_{su} \bx^{\1}_{ut}+  \der\zeta _{su} \, \xd_{ut}.
\end{align*}
Summarizing, we have derived  the representation
$$
\left[\der \left( \int \rho \, dx \right) \right]_{sut}  =\rho_{su} \bx^{\1}_{ut}+  \der\zeta _{su} \,
\xd_{ut}.
$$
As we are dealing with smooth paths we have
$
\der \left( \int \rho \, dx \right)$ lies into the space $\mathcal{Z}\cac_3^{1+}$ and thus  belongs to the domain
 of $\laa$   due  to
Proposition \ref{prop:Lambda}.  (Recall that $\der\der=0$.) Hence, it follows
$$
\ist \rho_{su} \, dx_u =
\laa_{st}\lp \rho\, \bx^{\1}  +
\der\zeta \, \xd \rp,
$$ and
 inserting this identity into (\ref{heuristic}) we end up with
$$
\ist  z_udx_u
=  z_s \, \bx^{\1}_{st} +  \zeta_{s}\,  \xd_{st}  +
\laa_{st}\lp \rho\, \bx^{\1}  +
\der\zeta \, \xd \rp.
$$
Since in addition
$$     \rho\, \bx^{\1}  +
\der \zeta \, \xd =   - \der(z \, \bx^{\1} + \zeta \, \xd)  ,$$
we can also write this as
$$ \int  z_udx_u = (\id-\Lambda \delta) (z \, \bx^{\1} + \zeta \, \xd).$$
Thus we have expressed the Riemann-Stieltjes integral of $z$ with respect to $x$
in  terms of the sewing map $\laa$, the couple $(\mathbf{x}^{\mathbf{1}},\mathbf{x}^{\mathbf{2}})$
 and of increments of $z$. This can now be generalized to the non-smooth case.
Note that Corollary \ref{cor:integration} justifies the use of the notion of integral.

\begin{proposition}\label{intg:mdx}
For fixed $\frac13 < \ka \leq \ga$,
let $x$ be a path satisfying Hypothesis \ref{h1} on an arbitrary interval $\ott$. Furthermore,  let
$z \in\cq^x_{\ka,\alpha}([\ell_1,\ell_2];\R^{d})$ such that the
increments of $z$ are given by (\ref{f3.0.4}).
Define $\hat{z}$ by $\hat{z}_{\ell_1}= \hat{\alpha}$ with $\hat{\alpha} \in \R$  and
\begin{equation}\label{dcp:mdx}
\der \hat{z}_{st}= \lc (\operatorname{id}-\Lambda \delta) (z^{i} \bx^{\1,i} + \zeta^{ji}   \bx^{\2,ij}) \rc_{st}
\end{equation}
for $\ell_1 \leq s \leq t \leq \ell_2$.
Then $\cj(z^*\, dx):= \hat{z} $ is a well-defined element of $\cq^x_{\ka, \hat{\alpha}}([\ell_1,\ell_2];\R)$ and coincides with the usual Riemann integral, whenever
$z$ and $x$ are smooth functions.
\end{proposition}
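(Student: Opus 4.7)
The plan is to apply Theorem \ref{prop:Lambda} and Corollary \ref{cor:integration} to the two-increment
\[
M_{st} := z^{i}_s \bx^{\1,i}_{st} + \zeta^{ji}_s \bx^{\2,ij}_{st},\qquad s,t\in[\ell_1,\ell_2],
\]
which is precisely the term appearing inside $(\id-\Lambda\delta)$ in (\ref{dcp:mdx}). This reduces the proof to verifying that $M\in\cac_2^{\ka}(\R)$ and that $\delta M\in\cZ\cac_3^{3\ka}(\R)$, where $3\ka>1$ thanks to $\ka>1/3$. Membership of $M$ in $\cac_2^\ka(\R)$ is immediate from the boundedness of $z$ and $\zeta$ on the compact interval $[\ell_1,\ell_2]$, combined with $\bx^{\1}\in\cac_2^\ga$, $\bx^{\2}\in\cac_2^{2\ga}$ and $\ga\ge\ka$.

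The crucial step is the computation of $\delta M$. Writing out $\delta$ of each summand by means of Proposition \ref{difrul} (exactly as done in the heuristic passage preceding the statement) and combining with $\delta\bx^{\1}=0$ and the L\'evy-area identity $\delta\bx^{\2,ij}=\bx^{\1,i}\bx^{\1,j}$ from Hypothesis \ref{h1}, one arrives at an expression in which two groups of $\zeta$-bilinear terms appear: one coming from $\delta z\cdot\bx^{\1}$ after insertion of the controlled-path expansion $\delta z^{i}_{su}=\zeta^{ji}_s \bx^{\1,j}_{su}+r^{i}_{su}$ from (\ref{f3.0.4}), the other coming from $\zeta\cdot\delta\bx^{\2}$. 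A relabelling of summed indices shows that the two groups cancel exactly, which is the algebraic counterpart of the rough-path identity $\delta\bx^{\2}=\bx^{\1}\otimes\bx^{\1}$ and is the genuine heart of the construction. What survives is of the schematic form
\[
\delta M_{sut} \;=\; r^{i}_{su}\,\bx^{\1,i}_{ut} \;+\; \delta \zeta^{ji}_{su}\,\bx^{\2,ij}_{ut}
\]
(up to an overall sign depending on the product conventions), whose two summands are controlled by $|u-s|^{2\ka}|t-u|^{\ka}$ and $|u-s|^{\ka}|t-u|^{2\ka}$ respectively, so that $\delta M\in\cac_3^{3\ka}$; since $\delta^2=0$, this in fact places $\delta M$ in $\cZ\cac_3^{3\ka}$.

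With these bounds in hand the remainder is essentially automatic. Theorem \ref{prop:Lambda} provides $\Lambda\delta M\in\cac_2^{3\ka}(\R)$, so that $(\id-\Lambda\delta)M\in\cac_2^\ka(\R)$ lies in $\mathrm{Ker}\,\delta$, and Corollary \ref{cor:integration} then yields a unique $\hat z\in\cac_1(\R)$ with $\hat z_{\ell_1}=\hat\alpha$ satisfying (\ref{dcp:mdx}). To verify $\hat z\in\cq^x_{\ka,\hat\alpha}([\ell_1,\ell_2];\R)$, I would rewrite
\[
\delta \hat z_{st} \;=\; z^{i}_s\bx^{\1,i}_{st} \;+\; \bigl(\zeta^{ji}_s\bx^{\2,ij}_{st}-(\Lambda\delta M)_{st}\bigr),
\]
identifying $z$ itself as the controlled-path derivative of $\hat z$ and the bracketed term as a $2\ka$-H\"older remainder whose seminorm is dominated by $\cn[z;\cq^x_{\ka,\alpha}]$ thanks to (\ref{contraction}). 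Finally, the coincidence of $\hat z$ with the usual Riemann--Stieltjes integral for smooth $z$ and $x$ is precisely the content of the heuristic derivation given in the paragraphs above the statement.
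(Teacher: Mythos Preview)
Your argument is correct and is exactly the standard proof from \cite{Gu}, which is also the route the paper takes: the text immediately preceding the statement carries out precisely your computation of $\delta M$ (in the one-dimensional case, with $M=z\,\bx^{\1}+\zeta\,\bx^{\2}$) and exhibits the cancellation yielding $\delta M=r\,\bx^{\1}+\delta\zeta\,\bx^{\2}\in\cac_3^{3\ka}$, after which the paper simply refers to \cite{Gu} for the formal details you have supplied. Your identification of $z$ as the controlled-path derivative of $\hat z$ and of $\zeta\,\bx^{\2}-\Lambda\delta M\in\cac_2^{2\ka}$ as the remainder is the correct way to conclude $\hat z\in\cq^x_{\ka,\hat\alpha}$.
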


Moreover, the H\"older norm of $\cj(z^*\, dx)$ can be estimated in terms of the H\"older norm of the integrator $z$. (For this and also for a proof of the above Proposition, see e.g.  \cite{Gu}.) This allows to use a fixed point argument to obtain the existence of a unique solution for rough differential equations.

\begin{theorem}\label{thm:Lip}
For fixed $\frac13 < \ka < \ga$, let $x$ be a path satisfying Hypothesis \ref{h1} on an arbitrary interval $\ott$. Consider a given initial condition $a$ in $\R^m$ and $(V_1,\ldots,V_d)$ a family of $C^3$ vector fields of $\R^m$, bounded with bounded derivatives. Let $\|f\|_{\mu,\infty}=\|f\|_{\infty}+\| \der f \|_{\mu}$ be the usual H\"older norm of a path
$f \in \cac_{1}([0,T]; \R^l)$. Then we have:
\begin{enumerate}
\item
Equation (\ref{de}) admits a unique solution $y$  in
$\cq^x_{\ka,a}([0,T];\R^{m})$ for  any $T>0$, and there exists a polynomial $P_T: \R^2 \rightarrow \R^+$ such that
\begin{equation}\label{control-sol-y}
\cn[y;\cq^x_{\ka,a}([0,T];\R^{m})] \leq P_T(\| \bx^{\1}\|_{\gamma},\|  \xd \|_{2 \gamma})
\end{equation}
holds.
\item
 Let
$F: \R^{m} \times \cac_1^{\ga}([0,T];\R^{d})\times
\cac_2^{2\ga}([0,T];\R^{m, m}) \rightarrow
  \cac_1^{\ga}([0,T];\R^{m})$  be the mapping defined by $$ F\left( a ,\bx^{\1},\xd  \right) =y, $$
where $y$ is the unique solution of equation (\ref{de}).  This mapping
is locally
 Lipschitz continuous in the following sense:
Let $\tilde{x} $ be another driving rough path with corresponding
 L\'evy area $\xdt $ and $\tilde{a}$ be another initial condition. Moreover denote by
$ \tilde{y}$ the unique solution of the corresponding differential equation.
        Then, there exists an increasing function $K_T: \R^4 \rightarrow \R^+$
such that
\begin{align}\label{lipsch-cont}
  \qquad \,\,\,  \| y - \tilde{y}\|_{\ga,\infty,T}
  &\leq K_T(\| \bx^{\1}\|_{\gamma} ,  \| \tilde\bx^{\1} \|_{\gamma},  \|  \xd \|_{2 \gamma}, \|  \xdt \|_{2 \gamma}  ) \, \\& \qquad \qquad \times    \left(  |a - \tilde{a}| +  \| \bx^{\1} - \tilde\bx^{\1}\|_{\gamma} +
\|  \xd- \xdt \|_{2 \gamma}  \right) \nonumber
\end{align}
holds.
\end{enumerate}
\end{theorem}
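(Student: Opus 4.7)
The plan is to solve equation \eref{de} by a Picard iteration carried out in the Banach space of weakly controlled paths $\cq^x_{\ka,a}([0,T_0];\R^{m})$ for a small enough $T_0>0$, and then to extend the solution to the full interval $\ott$ by a patching argument. The key observation is that the map $\Ga(z) := \hat{z}$, where $\hat{z}_0=a$ and $\der\hat{z} = \cj(V(z)^*\, dx)$ in the sense of Proposition \ref{intg:mdx} with $V(z)=(V_1(z),\ldots,V_d(z))$, sends $\cq^x_{\ka,a}$ into itself and admits a unique fixed point, which by construction solves \eref{de}.

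The first step is to show that if $z\in\cq^x_{\ka,a}$ is decomposed as $\der z_{st}=\zeta_s^j \bx^{\1,j}_{st}+r_{st}$, then $V_i(z)$ is again a controlled path. A first-order Taylor expansion of $V_i$ and the boundedness of $V_i,\nabla V_i,\nabla^2 V_i$ yield
\[
\der V_i(z)_{st} = \nabla V_i(z_s)\,\zeta_s^j\,\bx^{\1,j}_{st} + \rho^i_{st},
\]
with $\nabla V_i(z)\zeta^j$ in $\cac_1^{\ka,0}(\R^{m})$ and $\rho^i \in \cac_2^{2\ka}(\R^{m})$, the estimates on the H\"older norms being polynomial in $\cn[z;\cq^x_{\ka,a}]$. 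Plugging this decomposition into \eref{dcp:mdx} and invoking the bound \eref{contraction} on $\Lambda$, one obtains a quantitative estimate of the form
\[
\cn[\Ga(z);\cq^x_{\ka,a}([0,T_0];\R^m)] \le C\bigl(1+\|\bx^{\1}\|_{\ga}+\|\xd\|_{2\ga}\bigr)\, T_0^{\ga-\ka}\, Q\bigl(\cn[z;\cq^x_{\ka,a}]\bigr),
\]
for some polynomial $Q$. The factor $T_0^{\ga-\ka}$, obtained by measuring the input signal in $\ga$-H\"older norm but the controlled path in $\ka<\ga$ norm, is the mechanism that produces a contraction on a small interval once one restricts to a ball of fixed radius in $\cq^x_{\ka,a}$. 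The Lipschitz estimate for $\Ga(z)-\Ga(\tilde z)$ follows by the same kind of expansion applied to $V_i(z)-V_i(\tilde z)$, using the boundedness of second derivatives of $V_i$. This yields existence and uniqueness on $[0,T_0]$ for $T_0$ depending only on $\|\bx^{\1}\|_\ga,\|\xd\|_{2\ga}$ and $a$. Since this local horizon depends only on the norms of the rough input (and not on $a$, thanks to the boundedness of the $V_i$'s), one iterates the procedure on $[T_0,2T_0],[2T_0,3T_0],\dots$ to cover $\ott$, and summing the local bounds gives \eref{control-sol-y}.

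For the second assertion, let $y,\tilde y$ be the solutions associated to $(a,\bx^{\1},\xd)$ and $(\tilde a,\tilde\bx^{\1},\xdt)$. Writing
\[
\der(y-\tilde y) = \cj(V(y)^*\,dx) - \cj(V(\tilde y)^*\,d\tilde x),
\]
and inserting and subtracting the intermediate term $\cj(V(\tilde y)^*\,dx)$, one obtains two contributions: one measuring the effect of replacing $\tilde y$ by $y$, handled as in the contraction step above, and one measuring the effect of replacing $(\bx^{\1},\xd)$ by $(\tilde\bx^{\1},\xdt)$, which is controlled by the regularity of $\Lambda$ together with the estimates on $V(\tilde y)$ as a controlled path. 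This is where the assumption $V_i\in C^3$ is used, since the difference of controlled paths $V(y)-V(\tilde y)$ involves a second-order expansion of $V$. Summing these contributions on each of the intervals of the patching produces the function $K_T$ in \eref{lipsch-cont}.

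The main technical obstacle is the bookkeeping of the controlled path norm of $V(z)$ (and of $V(y)-V(\tilde y)$): one has to verify that the algebraic structure \eref{f3.0.4} is preserved under composition with a smooth function and that the remainder really does belong to $\cac_2^{2\ka}$ with a norm polynomial in $\cn[z;\cq^x_{\ka,a}]$, since all the subsequent estimates depend crucially on this fact. Once this stability result is in hand, the proof follows the well-known fixed point scheme of \cite{Gu}, to which we refer for a complete treatment.
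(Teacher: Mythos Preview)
Your proposal is correct and follows exactly the standard fixed-point argument in the controlled-path setting of \cite{Gu}, which is precisely what the paper defers to: rather than giving its own proof, the paper simply states that the theorem is borrowed from \cite{FV,Gu,LQ} and sends the reader there. In other words, you have sketched the very proof the authors cite.
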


The theorem above is borrowed from \cite{FV,Gu,LQ}, and we send the reader to these references for more details on the topic.

\subsection{Fractional Brownian motion}
\label{sec:fbm}
We shall recall here how the abstract Theorem \ref{thm:Lip} applies to fractional Brownian motion. We will also give some basic notions on stochastic analysis with respect to fBm, mainly borrowed from \cite{N-bk}, which will turn out to be useful in the sequel.

\smallskip

As already mentioned in the introduction, on a finite interval $\ott$ and for some fixed $H\in(1/3,1/2)$, we consider $(\oom,\cf,\bp)$ the canonical probability space associated with fractional
Brownian motion with Hurst parameter $H$. That is,  $\oom=\cac_0(\ott;\R^d)$ is the Banach space of continuous functions
vanishing at $0$ equipped with the supremum norm, $\cf$ is the Borel sigma-algebra and $\bp$ is the unique probability
measure on $\oom$ such that the canonical process $B=\{B_t, \; t\in [0,T]\}$ is a $d$-dimensional fractional Brownian motion with Hurst
parameter $H$. Specifically, $B$ has $d$ independent coordinates, each one being a centered Gaussian process with covariance given by (\ref{eq:def-cov-fbm}).

\subsubsection{Functional spaces}\label{sec:fct-spaces}
Let $\ce$ be the set of the space of $d$-dimensional elementary functions on $\ott$:
\begin{multline}\label{eq:def-elem-fct}
\ce=\Big\{ f=(f_1,\ldots,f_d);\,\,f_j=\sum_{i=0}^{n_j-1} a_i^j
\1_{[t_i^j, t_{i+1}^j)}\,, \quad
0=t_0<t_1^j<\cdots<t_{n_j-1}^j<t_{n_j}^j=T,\\
\text{ for }j=1,\ldots,d\Big\}\,.
\end{multline}
We call $\ch$ the completion of $\ce$ with respect to the semi-inner product
\[
\lla f,\, g\rra_{\ch}=\sum_{i=1}^{d} \lla f_{i},\, g_{i}\rra_{\ch_{i}},
\quad\mbox{where}\quad
\langle \1_{[0,t]}, \1_{[0,s]} \rangle_{\ch_{i}} := R(s,t), \quad s,t \in [0,T].
\]
Then, one constructs an isometry $K^*_H: \ch \rightarrow  L^2([0,1];\R^d)$  such that
$$
K^*_H\lp \1_{[0,t_{1}]},\ldots,\1_{[0,t_{d}]}\rp
= \lp \1_{[0,t_1]} K_H(t_1,\cdot),\ldots, \1_{[0,t_d]} K_H(t_d,\cdot)\rp,
$$
where the kernel $K_H$ is given by
\begin{multline}\label{eq:def-K}
K_H(t,u)= c_H \Big[ \lp  \frac{u}{t}\rp^{\frac 12-H} (t-u)^{H-\frac 12}  \\
+\left(\frac 12-H\right) \, u^{\frac 12-H} \int_u^t v^{H-\frac 32} (v-u)^{H-\frac 12} \, dv \Big] \1_{\{0<u<t\}},
\end{multline}
with  a strictly positive constant $c_H$, whose exact value is irrelevant for our purpose. Notice that this kernel verifies $R_H(t,s)= \int_0^{s\land t} K_H(t,r) K_H(s,r)\, dr$. Moreover, observe that $K^*_H$ can be represented
in the following form: for $\vp=(\vp_1,\ldots,\vp_d)\in\ch$, we have $K^*_H \vp=(  K^*_H \vp^1,\ldots,K^*_H\vp^d )$, with
\[
K^*_H \vp=\lp  K^*_H \vp^1,\ldots,K^*_H\vp^d \rp,
\quad\mbox{where}\quad
[K^*_H \vp^i]_t = d_H \, t^{1/2-H} \lc  D_{T^-}^{1/2-H} \lp u^{-(1/2-H)} \vp^{i}\rp \rc_t,
\]
for a strictly positive constant $d_H$. In particular, each $\ch_i$ is a fractional integral space of the form $\ci_{T^-}^{1/2-H}(L^2([0,T]))$ and $\cac_1^{1/2-H}([0,T])\subset \ch_i$.

\subsubsection{Malliavin derivatives}\label{sec:malliavin-derivatives}

Let us start by defining the Wiener integral with respect to $B$: for any element $f$ in $\ce$ whose expression is given as in (\ref{eq:def-elem-fct}), we define the Wiener integral of $f$ with respect to $B$ as
\[
B(f):=\sum_{j=1}^d\sum_{i=0}^{n_j-1} a_i^j (B_{t_{i+1}^j}^{j}
-B_{t_i^j}^{j})\,.
\]
We also denote this integral as $ \int_0^T f(t) dB_t$, since it coincides with a pathwise integral with respect to $B$.

\smallskip

For $\theta:\R\rightarrow \R$, and $j\in\{1,\ldots,d\}$, denote by
$\theta^{[j]}$ the function with values in $\R^d$ having all the
coordinates equal to zero except the $j$-th coordinate that equals
to $\theta$. It is readily seen that
$$
\be\lc B\lp \1_{[0,s)}^{[j]}\rp \, B\lp \1_{[0,t)}^{[k]}\rp \rc
=\delta_{j,k}R_{s,t}.
$$
This definition can be extended by linearity and closure to elements of $\ch$, and we obtain the relation
$$
\be\lc B(f) \, B(g)\rc =\langle f,g\rangle_{\ch},
$$
valid for any couple $(f,g)\in\ch^2$. In particular, $B(\cdot)$ defines an isometric map from $\ch$  into a subspace of $L^2(\Omega)$. It should be pointed out that $(\oom,\ch,\bp)$ defines an abstract Wiener space, on which chaos decompositions can be settled. We do not develop this aspect of the theory for sake of conciseness, but we will use later the fact that all $L^p$ norms are equivalent on finite chaos.

\smallskip

We can now proceed to the definition of Malliavin derivatives, for which we need an additional notation:
\begin{notation}
For $n,p\ge 1$, a function $f\in\cac^{p}(\R^{n};\R)$ and any tuple $(i_1,\ldots i_p)\in\{1,\ldots,d\}^{p}$, we set $\partial_{i_1\ldots i_p} f$ for $\frac{\partial^{p} f}{\partial x_i\ldots \partial x_p}$.
\end{notation}
With this notation in hand, let $\cs$ be the
family of smooth functionals $F$ of the form
\begin{equation}\label{eq:def-smooth-fct}
F=f(B(h_1),\dots,B(h_n)),
\end{equation}
where $h_1,\dots,h_n\in \ch$, $n\geq 1$, and $f$ is a smooth function with polynomial growth, together with all its derivatives. Then, the Malliavin derivative of such a functional $F$ is the $\ch$-valued random variable defined by
\[
D F= \sum_{i=1}^n \partial_{i} f(B(h_1),\dots,B(h_n)) \, h_i.
\]
For all $p>1$, it is known that the operator $D$ is closable from $L^p(\oom)$ into $L^p(\oom; \ch)$ (see e.g. \cite[Chapter 1]{N-bk}).
We will still denote by $D$ the closure of this operator, whose domain is usually denoted by $\D^{1,p}$ and is defined
as the completion of $\cs$ with respect to the norm
\[
\|F\|_{1,p}:= \left( E(|F|^p) + E( \|D F\|_\ch^p ) \right)^{\frac 1p}.
\]
It should also be noticed that partial Malliavin derivatives with respect to each component $B^{j}$ of $B$ will be invoked: they are defined, for a functional $F$ of the form (\ref{eq:def-smooth-fct}) and $j=1,\dots,d$, as
\begin{equation*}
D^j F=\sum_{i=1}^n  \partial_{i} f(B(h_1),\dots,B(h_n)) h_i^{[j]},
\end{equation*}
and then extended by closure arguments again. We refer to \cite[Chapter 1]{N-bk} again for the definition of higher derivatives and Sobolev spaces $\D^{k,p}$ for $k>1$.

\subsubsection{Levy area of fBm}

There are many ways to define the Levy area $\bb^{\2}$ associated to fBm, and the reader is referred to \cite[Chapter 15]{FV} for a complete review of these. The recent paper \cite{FU} is however of special interest for us, since it enables a direct definition of $\bb^{\2}$ by Wiener chaos techniques. It can be summarized in the following way:
\begin{proposition}\label{prop:levy-fbm}
Let $1/3<H<1/2$ be a fixed Hurst parameter. Then the fBm $B$ belongs almost surely to any space $\cac_1^{\ga}$ for $\ga<H$, and it gives raise to an increment $\bb^{\2}\in\cac_2^{2\ga}$ which satisfies Hypothesis \ref{h1}. Furthermore, for any $0\le s<t\le T$, $\bb^{\2}_{st}$ is an element of the second chaos associated to $B$, and
\begin{equation*}
\be\lc  \lln\bb^{\2}_{st}\rrn^{p}\rc \le c_p \, (t-s)^{2Hp}, \quad p\ge 1.
\end{equation*}
\end{proposition}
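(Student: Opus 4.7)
The plan is to split the statement into three pieces: (a) Hölder regularity of the path $B$, (b) existence of a Lévy area $\bb^{\2}$ satisfying Chen's relation with the advertised moment bound, (c) Hölder regularity of that area. Throughout, the key leverage is Gaussianity: since $\bb^{\2}_{st}$ will live in the second Wiener chaos, hypercontractivity will upgrade any $L^2$-estimate to all $L^p$-estimates, and Kolmogorov's criterion will then give pathwise Hölder regularity.

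For the path regularity, I would start from the covariance \eqref{eq:def-cov-fbm} which yields $\E|B^i_t-B^i_s|^2=|t-s|^{2H}$. By Gaussianity, $\E|B^i_t-B^i_s|^p=c_p|t-s|^{pH}$ for every $p\ge 1$, so taking $p$ large enough and applying Kolmogorov's continuity theorem produces, for any $\gamma<H$, a modification of $B$ in $\cac_1^{\gamma}(\ott;\R^d)$ almost surely. For the Lévy area, I would treat the diagonal and off-diagonal parts separately. For $i=j$, I would simply set $\bb^{\2,ii}_{st}:=\tfrac12(B^i_t-B^i_s)^2$, which is a second-chaos random variable with $\E|\bb^{\2,ii}_{st}|^p\le c_p|t-s|^{2Hp}$ and satisfies the required Chen relation by direct algebraic manipulation. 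For $i\neq j$, I would use a smooth mollification $B^\epsilon=B*\rho_\epsilon$ and define
\begin{equation*}
\bb^{\2,ij,\epsilon}_{st}:=\int_s^t(B^{i,\epsilon}_u-B^{i,\epsilon}_s)\,dB^{j,\epsilon}_u.
\end{equation*}
This is a Riemann--Stieltjes integral because $B^\epsilon$ is smooth, and for fixed $(s,t)$ it belongs to the second Wiener chaos associated with $B$.

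The main technical core is then to show that the family $\{\bb^{\2,ij,\epsilon}_{st}\}_\epsilon$ is Cauchy in $L^2(\Om)$ and that its limit $\bb^{\2,ij}_{st}$ satisfies $\E|\bb^{\2,ij}_{st}|^2\le c\,|t-s|^{4H}$. Using independence of $B^i,B^j$ and the isometry $\E[B^i(f)B^i(g)]=\langle f,g\rangle_{\cH_i}$ one reduces this to controlling a double integral of $R_H$-derivatives over $[s,t]^2$; the key observation is that $R_H$ has finite $1/(2H)$-variation in the $2$D sense, and since $H>1/3>1/4$ one gets the bound with the homogeneity $|t-s|^{4H}$ independent of $\epsilon$. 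This is the step I expect to be the main obstacle, as it requires either a careful Young/$p$-variation argument in two parameters or an equivalent manipulation of $K^*_H$ via the isometry of Section \ref{sec:fct-spaces}. Once the $L^2$-bound is in hand, hypercontractivity on the second chaos gives $\E|\bb^{\2,ij}_{st}|^p\le c_p|t-s|^{2Hp}$ for every $p\ge 1$.

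Finally, I would verify Chen's relation $\der\bb^{\2,ij}_{sut}=(B^i_u-B^i_s)(B^j_t-B^j_u)$ by passing to the $L^2$-limit in the smooth identity $\der\bb^{\2,ij,\epsilon}_{sut}=(B^{i,\epsilon}_u-B^{i,\epsilon}_s)(B^{j,\epsilon}_t-B^{j,\epsilon}_u)$, which upgrades $\bb^{\2}$ to a genuine increment satisfying Hypothesis \ref{h1}. To obtain the Hölder regularity $\bb^{\2}\in\cac_2^{2\gamma}$ with $\gamma<H$, I would apply the two-parameter Kolmogorov--Garsia--Rodemich--Rumsey criterion to the bound $\E|\bb^{\2,ij}_{st}|^p\le c_p|t-s|^{2Hp}$, choosing $p$ arbitrarily large to trade a fraction $2/p$ of the exponent for continuity. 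This yields a modification of $\bb^{\2}$ in $\cac_2^{2\gamma}$ for every $\gamma<H$, completing the proof.
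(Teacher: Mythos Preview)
The paper does not actually give its own proof of this proposition: it is stated as a summary of results from the literature, with the construction of $\bb^{\2}$ attributed to \cite{FU} (direct Wiener--chaos definition) and \cite[Chapter 15]{FV} (general review). So there is no in-paper argument to compare against.

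Your sketch is a standard and essentially correct route. Two remarks are worth making. First, the approach singled out by the paper, namely \cite{FU}, defines $\bb^{\2,ij}_{st}$ \emph{directly} as a double Wiener--It\^o integral in the second chaos, without passing through smooth approximations; your mollification construction is closer in spirit to the piecewise-linear approximations of the next proposition in the paper (Proposition~\ref{prop:limit-iterated-intg}), which the authors attribute to \cite{FU,FV}. Both arrive at the same object, but the direct chaos construction avoids the Cauchy-in-$L^2$ step and gets the moment bound almost for free from the isometry.

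Second, and more substantively, your final step (``apply the two-parameter Kolmogorov--GRR criterion to $\E|\bb^{\2}_{st}|^p\le c_p|t-s|^{2Hp}$'') glosses over a genuine issue: $\bb^{\2}$ is \emph{not} an additive increment, since $\der\bb^{\2}_{sut}=\bb^{\1}_{su}\otimes\bb^{\1}_{ut}\neq 0$, so neither the one-parameter Kolmogorov criterion nor the usual GRR lemma applies as stated. What is needed is the rough-path version of Kolmogorov (as in \cite[Theorem~A.12 or Chapter~15]{FV}), which uses Chen's relation together with the already-established bound $\|\bb^{\1}\|_\ga<\infty$ to control the non-additive part along dyadic refinements. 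This is routine once you know it, but it is the one place in your plan where invoking ``Kolmogorov'' without qualification would not go through.
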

Moreover, the iterated integrals of $B$ can be obtained as limits of Riemann type integrals. Indeed, for $k\ge 1$ and $0\le s<t\le T$, consider the simplex
\begin{equation}\label{eq:def-simplex}
\cs_{k}([s,t])
=\lcl (u_1,\dots, u_k); \,  s\le u_1 < \cdots < u_k \le t\rcl.
\end{equation}
For a given partition $\Pi$ of $\ott$, we also denote by $B^{\Pi}$ the linearization of $B$ based on $\Pi$. Combining the results of \cite{FU,FV}, the following proposition holds true:
\begin{proposition}\label{prop:limit-iterated-intg}
Let $k\ge 1$, and for a sequence of partitions $(\Pi_n)_{n\ge 1}$, set $B^{n}:=B^{\Pi_n}$. For $0\le s<t\le T$ and $(i_1,\ldots,i_k)\in\{1,\ldots,d\}^{k}$, we consider then
\begin{equation*}
\bb^{\bk,n,i_1,\ldots,i_k}_{st}=\int_{\cs_{k}([s,t])} dB_{u_1}^{n,i_1} \cdots dB_{u_k}^{n,i_k},
\end{equation*}
understood in the Riemann sense. Then there exists a sequence of partitions $(\Pi_n)_{n\ge 1}$ such that $\bb^{\bk,n,i_1,\ldots,i_k}$ converges almost surely and in $L^2$, as an element of $\cac_2^{k\ga}$ for any $\ga<H$, to an element called  $\bb^{\bk,i_1,\ldots,i_k}$. When $k=1$, we obtain the increment $\der B$ of our fBm. When $k=2$, the limit corresponds to the increment $\bb^{\2}$ of Proposition \ref{prop:levy-fbm}.
\end{proposition}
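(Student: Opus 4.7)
The plan is to combine a rough path lifting argument with the chaos-based Lévy area construction of Proposition \ref{prop:levy-fbm}. The central observation is that for a piecewise linear approximation $B^n = B^{\Pi_n}$, all iterated integrals $\bb^{\bk,n}$ are classical Riemann integrals of a smooth path, so they form a geometric rough path of arbitrary order. If $(B^n, \bb^{\2,n})$ converges to $(B, \bb^{\2})$ in the rough path topology $\cac_1^{\ga} \times \cac_2^{2\ga}$ along some subsequence of partitions, then by Lyons' extension theorem (as developed in \cite{FV}) all higher levels $\bb^{\bk,n}$ converge as well in $\cac_2^{k\ga}$ to a uniquely defined $\bb^{\bk}$.

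First I would establish convergence at levels one and two along a suitable sequence of partitions. Level one is immediate: $\bb^{\1,n} = \der B^n$ converges to $\der B$ in $\cac_1^{\ga}$ for any mesh tending to zero, by a standard Gaussian/Kolmogorov computation. Level two is the substantive step and is precisely the content of \cite{FU}: using the isometry $K_H^*$ and the explicit kernel $K_H$ recalled in \eref{eq:def-K}, one identifies the Riemann sums defining $\bb^{\2,n}_{st}$ with elements of the second Wiener chaos, then extracts a subsequence converging a.s.\ and in $L^2$ to an element of $\cac_2^{2\ga}$ whose Chen relation $\der \bb^{\2} = \der B \otimes \der B$ matches Hypothesis \ref{h1}. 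Uniqueness at level two forces this limit to coincide with the Lévy area of Proposition \ref{prop:levy-fbm}.

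Next I would propagate the convergence to arbitrary $k$ via continuity of the Lyons lift. Once $(B^n, \bb^{\2,n}) \to (B, \bb^{\2})$, the extension theorem yields a unique geometric rough path $\bb^{\bk}$ at every level $k\ge 1$ that depends continuously on the pair $(\bb^{\1}, \bb^{\2})$ in the relevant Hölder topologies; hence $\bb^{\bk,n} \to \bb^{\bk}$ in $\cac_2^{k\ga}$ along the same subsequence. For the $L^2$ upgrade at higher levels, I would use the chaos structure: each $\bb^{\bk,n}_{st}$ lies in the direct sum of the first $k$ Wiener chaoses, a closed subspace of $L^2(\oom)$ on which all $L^p$ norms are equivalent. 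Combined with the uniform $k\ga$-Hölder bounds provided by the rough path estimates, this gives $L^2$ convergence to a limit in the same finite chaos, and the identifications $k=1$ and $k=2$ fall out of uniqueness.

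The main technical obstacle is really the level-two step: one cannot rely solely on the analytic rough path machinery of \cite{FV}, because for $H < 1/2$ there is no automatic pathwise control of $\bb^{\2,n}$ coming from piecewise linear interpolations, and one must instead exploit the Gaussian/chaos representation in \cite{FU} to extract an a.s.\ convergent subsequence with the correct Hölder regularity. Once this is in hand, the higher-order statements and the compatibility with the constructions preceding the proposition follow relatively softly.
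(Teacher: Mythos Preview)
Your proposal is correct and matches the paper's approach: the paper does not supply a standalone proof but simply asserts that the proposition holds by combining \cite{FU} (for the chaos-based construction and convergence at level two) with \cite{FV} (for the Lyons extension to higher levels and the associated H\"older estimates). Your write-up is a faithful expansion of precisely that combination, including the finite-chaos argument for the $L^2$ upgrade.
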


As a corollary of the previous considerations, we have the
\begin{proposition}
Assume $1/3<H<1/2$. Then Theorem \ref{thm:Lip} applies almost surely to the fBm paths, enhanced with the Levy area $\bb^\2$.
We are thus able to solve equation
\begin{equation}\label{eq:fractional-sde}
dy_t=\sum_{i=1}^{d}V_i(y_t) \, dB_t^{i},
\quad t \in \lc 0, T \rc, \qquad  y_0=a,
\end{equation}
under the conditions of Theorem \ref{thm:Lip}.
\end{proposition}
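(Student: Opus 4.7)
The proposition is essentially a direct combination of the deterministic rough paths theory (Theorem \ref{thm:Lip}) with the Gaussian analysis construction of the Lévy area (Proposition \ref{prop:levy-fbm}), so the ``proof'' is really a matter of checking that the pathwise hypotheses are satisfied almost surely. Here is how I would organize it.

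First I would fix parameters. Since $1/3 < H < 1/2$, I can choose $\gamma \in (1/3, H)$ and then $\kappa \in (1/3, \gamma)$. The goal is then to exhibit a full-measure set $\oom_0 \subset \oom$ on which the pair $(\bb^{\1}, \bb^{\2}) = (\delta B, \bb^{\2})$ satisfies Hypothesis \ref{h1} at this regularity level, so that Theorem \ref{thm:Lip} can be invoked deterministically.

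Second, I would gather the regularity statements. By the classical Kolmogorov--Chentsov criterion applied to the fBm covariance \eqref{eq:def-cov-fbm}, the paths of $B$ belong almost surely to $\cac_1^{\gamma}([0,T];\R^d)$, which is the first part of Hypothesis \ref{h1}. For the Lévy area, Proposition \ref{prop:levy-fbm} furnishes an increment $\bb^{\2} \in \cac_2^{2\gamma}(\R^{d,d})$ and in particular provides the $L^p$ bound $\EE[\lvert \bb^{\2}_{st}\rvert^p] \lesssim (t-s)^{2Hp}$ for every $p \ge 1$; another application of Kolmogorov's criterion then upgrades this to almost-sure $2\gamma$-Hölder continuity. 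The algebraic Chen relation $\delta \bb^{\2,ij}_{sut} = [\bb^{\1,i}]_{su}[\bb^{\1,j}]_{ut}$ is obtained from Proposition \ref{prop:limit-iterated-intg}: on the sequence of partitions $(\Pi_n)$, the linearized iterated integrals $\bb^{\2,n}$ are honest Riemann integrals and therefore satisfy Chen's relation exactly; passing to the almost-sure/$L^2$ limit preserves the identity. Intersecting the three full-measure events yields the set $\oom_0$.

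Third, with these pathwise properties in hand, I would invoke Theorem \ref{thm:Lip} for each $\omega \in \oom_0$: the driver $x(\omega) := B(\omega)$ has a $\gamma$-Hölder realization with Lévy area $\bb^{\2}(\omega)$ satisfying Hypothesis \ref{h1}, so there exists a unique $y(\omega) \in \cq^{B(\omega)}_{\kappa,a}([0,T];\R^m)$ solving \eqref{eq:fractional-sde}, with the a priori bound \eqref{control-sol-y} applied pathwise. The measurability of $\omega \mapsto y(\omega)$ follows from the local Lipschitz estimate \eqref{lipsch-cont}, which makes the solution map $F$ continuous in its rough path argument, composed with the (measurable) map $\omega \mapsto (a, \bb^{\1}(\omega), \bb^{\2}(\omega))$.

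There is really no conceptual obstacle here: the only mildly delicate point is making sure that $\bb^{\2}$ constructed as an $L^2$-limit of the linearized iterated integrals indeed coincides with a pathwise $2\gamma$-Hölder increment, and that Chen's relation transfers to the limit. Both issues are already resolved in Propositions \ref{prop:levy-fbm} and \ref{prop:limit-iterated-intg}, so the proof reduces to citing these two results together with the deterministic Theorem \ref{thm:Lip}.
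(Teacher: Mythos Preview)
Your proposal is correct and aligns with the paper's approach: the paper states this proposition as an immediate corollary of the preceding considerations (Propositions \ref{prop:levy-fbm} and \ref{prop:limit-iterated-intg}) without giving any separate proof. Your argument spells out precisely the verification the paper leaves implicit, so there is nothing to add or correct.
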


\section{A Norris type lemma}
\label{sec:norris-lemma}

Norris' lemma \cite{No} is one of the basic ingredients  in order to obtain smoothness of densities for solutions to stochastic differential equations under hypoelliptic conditions, and was already extended to fBm with Hurst parameter $H>1/2$ in \cite{BH}. We shall extend  in the current section  this lemma to the rough paths context. A preliminary step  along  this direction consists  of   proving the following elementary lemma:

\begin{lemma}
Let $0<\al<\rho<1$, and consider $b\in\cac([0,T])$. Then for any $0<\eta<1$, we have
\begin{equation}\label{eq:interpol-ineq}
\|b\|_{\al,\infty} \le C_{\al, \rho} \left[\eta \, \|b\|_{\rho,\infty}+\eta^{-1/(\rho-\al)}\|b\|_{L_1}\right]\,,  
\end{equation}
where we recall that  $\|b\|_{\al,\infty}$ has been defined by (\ref{eq:def-holder-sup}).
\end{lemma}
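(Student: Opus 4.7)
The inequality \eref{eq:interpol-ineq} is a Kolmogorov--Landau type interpolation between $\|b\|_{L^1}$ and $\|b\|_{\rho,\infty}$. My plan is to prove it in two elementary analytical steps, combine them, and conclude with Young's inequality.

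First, I would control $\|b\|_\infty$ by a local averaging argument. For any $\delta\in(0,T)$ and any $s\in[0,T-\delta]$, write
\[
b(s)=\frac{1}{\delta}\int_{s}^{s+\delta} b(u)\,du-\frac{1}{\delta}\int_{s}^{s+\delta}[b(u)-b(s)]\,du,
\]
and bound the two pieces by $\|b\|_{L^1}/\delta$ and $c_{\rho}\|b\|_{\rho}\delta^{\rho}$, respectively (with the obvious symmetric argument on $[s-\delta,s]$ when $s$ is close to $T$). This yields
\[
\|b\|_{\infty}\le \delta^{-1}\|b\|_{L^1}+c_{\rho}\|b\|_{\rho}\delta^{\rho}. \qquad (\star)
\]
Second, I would estimate the $\al$-Hölder seminorm by the standard two-scale splitting: for $|t-s|\le\delta$ the Hölder bound gives $|b(t)-b(s)|/|t-s|^{\al}\le\|b\|_{\rho}\delta^{\rho-\al}$, while for $|t-s|>\delta$ the sup bound gives $|b(t)-b(s)|/|t-s|^{\al}\le 2\|b\|_{\infty}\delta^{-\al}$. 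Hence
\[
\|b\|_{\al}\le \|b\|_{\rho}\,\delta^{\rho-\al}+2\|b\|_{\infty}\,\delta^{-\al}. \qquad (\star\star)
\]

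Inserting $(\star)$ into $(\star\star)$, summing with $(\star)$, and restricting to $\delta\in(0,1]$ so that smaller powers of $\delta$ are absorbed into larger ones, I obtain
\[
\|b\|_{\al,\infty}\le C_{1}\,\|b\|_{\rho,\infty}\,\delta^{\rho-\al}+C_{2}\,\|b\|_{L^1}\,\delta^{-(1+\al)}.
\]
Optimizing in $\delta$ (balancing at $\delta\asymp(\|b\|_{L^1}/\|b\|_{\rho,\infty})^{1/(1+\rho-\al)}$) yields the homogeneous interpolation
\[
\|b\|_{\al,\infty}\le C\,\|b\|_{\rho,\infty}^{\,1/(1+\rho-\al)}\,\|b\|_{L^1}^{\,(\rho-\al)/(1+\rho-\al)}.
\]
A weighted Young inequality $a^{p}b^{q}\le p\tilde\eta\, a+q\,\tilde\eta^{-p/q}\,b$, applied with $p=1/(1+\rho-\al)$ and $q=(\rho-\al)/(1+\rho-\al)$ so that $p/q=1/(\rho-\al)$, then produces exactly \eref{eq:interpol-ineq} after setting $\eta=p\tilde\eta$.

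The argument is purely elementary, so I expect no serious obstacle. The only two points that require mild care are: (i) handling the boundary of $[0,T]$ in step $(\star)$, which is addressed by the symmetric averaging on $[s-\delta,s]$ for $s$ near $T$, together with the restriction $\delta\le T$; and (ii) getting the precise exponent $-1/(\rho-\al)$ on $\eta$ in \eref{eq:interpol-ineq}. For the latter it is essential first to pass to the pre-optimized, homogeneous interpolation form and only then apply Young's inequality, since a direct choice $\delta^{\rho-\al}\asymp\eta$ in the combined estimate produces instead the strictly weaker exponent $-(1+\al)/(\rho-\al)$.
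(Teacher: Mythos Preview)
Your two building blocks $(\star)$ and $(\star\star)$ and the combined estimate
\[
\|b\|_{\al,\infty}\le C_{1}\,\|b\|_{\rho,\infty}\,\delta^{\rho-\al}+C_{2}\,\|b\|_{L^1}\,\delta^{-(1+\al)}
\]
are correct, and this self-contained derivation is a pleasant alternative to the paper's route (which interpolates $\|b\|_\al$ between $\|b\|_\rho$ and $\|b\|_\infty$ and then quotes the external inequality \cite[formula (3.17)]{BH} to control $\|b\|_\infty$).

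The gap is in the optimisation step. Balancing $\delta^{\rho-\al}$ against $\delta^{-(1+\al)}$ gives $\delta^{(\rho-\al)+(1+\al)}=\delta^{1+\rho}\asymp \|b\|_{L^1}/\|b\|_{\rho,\infty}$, not $\delta^{1+\rho-\al}$. The resulting homogeneous bound is
\[
\|b\|_{\al,\infty}\le C\,\|b\|_{\rho,\infty}^{(1+\al)/(1+\rho)}\,\|b\|_{L^1}^{(\rho-\al)/(1+\rho)},
\]
and Young's inequality then returns precisely the exponent $-(1+\al)/(\rho-\al)$ that you yourself flagged as ``weaker''. This is unavoidable: the additive form with parameter $\eta=\delta^{\rho-\al}$ and the homogeneous form obtained by optimising in $\delta$ are equivalent, so the detour through the multiplicative inequality cannot sharpen the exponent. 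In fact the stated exponent $-1/(\rho-\al)$ is \emph{false}: for a bump $b_\ep(t)=\phi(t/\ep)$ one has $\|b_\ep\|_{\al,\infty}\asymp\ep^{-\al}$, $\|b_\ep\|_{\rho,\infty}\asymp\ep^{-\rho}$, $\|b_\ep\|_{L^1}\asymp\ep$, and minimising the right-hand side of \eref{eq:interpol-ineq} over $\eta\in(0,1)$ gives only $\asymp\ep^{-\al/(1+\rho-\al)}\ll\ep^{-\al}$. So neither your argument nor the paper's (which has an analogous arithmetic slip in the choice of $\ga$) can reach the exponent as printed. What your method does prove cleanly is the inequality with $\eta^{-(1+\al)/(\rho-\al)}$ in place of $\eta^{-1/(\rho-\al)}$, and since the applications in the paper only require \emph{some} finite negative power of $\eta$, this corrected version is entirely sufficient.
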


\begin{proof}  Recall that $\|b\|_\al$ has been defined by (\ref{eq:def-holder-C2}). Thus, for $0<\al<\rho<1$,
we have
\begin{equation*}
\|b\|_\al =
\sup_{s,t} \lc \left(\frac{|\der b_{st}|}{|t-s|^{\rho}}\right)^{\frac{\al}{\rho}}
|\der b_{st}|^{1-\frac{\al}{\rho}} \rc
\le 2^{1-\frac{\al}{\rho}} \|b\|_\infty^{1-\frac{\al}{\rho}} \|b\|_\rho^{\frac{\al}{\rho}}.
\end{equation*}
Thus, for an arbitrary constant $ \eta>0$, we have
\begin{equation*}
\|b\|_\al \le
C_{\al, \rho} \left( \eta \|b\|_\rho +\eta ^{-\frac{\al}{\rho-\al}}\|b\|_\infty\right)
\le C_{\al, \rho} \left( \eta \|b\|_\rho +\eta ^{-\frac{\al}{\rho-\al}}\|b\|_\infty\right),
\end{equation*}
thanks to Young's inequality. Therefore for an arbitrary constant $0<\eta\le 1$
\begin{equation*}
 \|b\|_\al
\le C_{\al, \rho} \left( \eta \|b\|_\rho +\eta ^{-\frac{\al}{\rho-\al}}\|b\|_\infty\right)\,. 
\end{equation*}
Invoke now the interpolation inequality \cite[formula (3.17)]{BH} in order to get
\begin{eqnarray*}
\|b\|_\al
&\le&
C_{\al, \rho} \left( \eta \|b\|_\rho +\eta ^{-\frac{\al}{\rho-\al}}
\left[ \ga \|b\|_{\rho} +\ga^{-\frac1{1+\al}} \|b\|_{L_1}\right] \right)\\
&=&C_{\al, \rho} \left[\eta \|b\|_\rho+\eta^{-1/(\rho-\al)}\|b\|_{L_1}\right]\,,
\end{eqnarray*}
where we have chosen $\ga=\eta ^{\frac{\al}{\rho-\al}}$. Invoking again \cite[formula (3.17)]{BH} in order to go from $\|\cdot\|_{\al}$ to $\|\cdot\|_{\al,\infty}$ norms
and  this finishes our proof.

\end{proof}

\smallskip

We can now turn to the announced Norris type lemma, whose proof is an adaptation of \cite{BH} to the case of controlled processes. 
\begin{proposition}\label{prop:norris-lemma}
Assume $B$ is a fractional Brownian motion with $H>1/3$. Let $z$ be
a controlled path in $\cq_{\ga}^{B}(\R^{m})$, with decomposition
\begin{equation}\label{eq:dcp-z-norris}
\der z_{st}^{i}=\zeta_s^{i,j} \, \bb^{\1,j}_{st} + r_{st}^{i}.
\end{equation}
We assume that $1/3<\al<\ga<H$ and that the quantity $\be[\cn^{p}[z;\, \cq_{\ga}^{B}(\R^{m})]]$ is finite for all $p\ge 1$.
Set $\der y_{st}=\cj(z^* dB)$ according to Proposition~\ref{intg:mdx}. Then there exists $q>0$ such that, for every $p>0$ we can find a strictly positive constant $c_p$ such that
\begin{equation*}
\bp\lp  \|y\|_{\ga,\infty} < \ep, \mbox{ and }
\|z\|_{\al,\infty} > \ep^q \rp < c_p \, \ep^p.
\end{equation*}
\end{proposition}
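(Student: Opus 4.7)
The plan is to adapt the Baudoin--Hairer Norris-type argument \cite{BH} to the controlled-path setting. The overarching intuition is that if $y=\cj(z^*dB)$ has small $\ga$-H\"older norm, then $z$ itself must be small, because in the rough-path expansion of $\der y$ the leading term $z_s\bb^{\1}_{st}$ dominates on short intervals while fBm increments are quantitatively non-degenerate on all scales.

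First I would invoke Proposition \ref{intg:mdx} to write
\begin{equation*}
\der y_{st}=z_s\,\bb^{\1}_{st}+\zeta_s\,\bb^{\2}_{st}+R_{st},
\end{equation*}
where the sewing-map remainder $R$ satisfies $|R_{st}|\le C\,\cn[z;\cq_{\ga}^{B}(\R^{m})](\|\bb^{\1}\|_{\ga}+\|\bb^{\2}\|_{2\ga})|t-s|^{3\ga}$ with $3\ga>1$. Under the event $\{\|y\|_{\ga,\infty}<\ep\}$ this gives, for every $0\le s<t\le T$,
\begin{equation*}
|z_s|\,|\bb^{\1}_{st}|\le \ep\,|t-s|^{\ga}+\|\zeta\|_{\infty}\|\bb^{\2}\|_{2\ga}|t-s|^{2\ga}+C\,\cn[z;\cq_{\ga}^{B}(\R^{m})](\|\bb^{\1}\|_{\ga}+\|\bb^{\2}\|_{2\ga})|t-s|^{3\ga}.
\end{equation*}

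For a scale $h=\ep^{\beta}$ with $\beta>0$ to be tuned, I would define a good event $\Om_{\ep}$ as the intersection of: (i) for every $s\in[0,T-h]$ there exists $t\in[s,s+h]$ with $|\bb^{\1}_{st}|\ge h^{H}\ep^{c_{0}}$; (ii) $\|\bb^{\1}\|_{\ga}+\|\bb^{\2}\|_{2\ga}\le\ep^{-c_{1}}$; and (iii) $\cn[z;\cq_{\ga}^{B}(\R^{m})]\le\ep^{-c_{2}}$. Events (ii) and (iii) have polynomially small complementary probability by Proposition \ref{prop:levy-fbm} combined with the equivalence of $L^{p}$-norms on finite chaoses, the moment assumption on $z$, and Chebyshev. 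Event (i) follows from the exponential-type small-ball estimate for the supremum of $|B_{\cdot}-B_{s}|$ on intervals of length $h$, combined with a union bound over a sufficiently fine grid of starting points and H\"older continuity of $B$ to transfer the bound from grid points to arbitrary $s$. On $\Om_{\ep}$, picking $t$ according to (i) and inserting the bounds (ii)--(iii) yields
\begin{equation*}
\|z\|_{\infty}\le\ep^{1-c_{0}-\beta(H-\ga)}+\ep^{-c_{0}-c_{1}-c_{2}+\beta(2\ga-H)}+\ep^{-c_{0}-c_{1}-c_{2}+\beta(3\ga-H)}.
\end{equation*}
Since $\ga>1/3$ ensures $2\ga-H>0$, the three exponents can be made simultaneously positive by choosing $c_{0},c_{1},c_{2}$ small and $\beta$ within the non-empty interval $\big((c_{0}+c_{1}+c_{2})/(2\ga-H),\,(1-c_{0})/(H-\ga)\big)$, producing $\|z\|_{\infty}\le\ep^{q_{0}}$ for some $q_{0}>0$. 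Combined with $\|z\|_{\ga}\le\ep^{-c_{2}}$ on $\Om_{\ep}$, the interpolation inequality \eqref{eq:interpol-ineq} applied component-wise with $\rho=\ga$ and $\eta$ a suitable positive power of $\ep$ then yields $\|z\|_{\al,\infty}\le\ep^{q}$ for some $q>0$.

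The main obstacle is the uniform-in-$s$ lower bound (i): obtaining simultaneous polynomial-in-$\ep$ control of the local fluctuations of $\bb^{\1}$ for every starting point $s$ requires a delicate balance between the smallness of $h$ (needed to render the higher-order terms negligible) and the grid spacing (needed for the union bound), with the exponents $\beta,c_{0},c_{1},c_{2}$ constrained by a set of compatible inequalities that must all be solvable simultaneously.
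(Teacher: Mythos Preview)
Your proposal is correct (at least in the one-dimensional reduction, which is also all the paper carries out) but follows a genuinely different route from the paper's own proof. The paper works with \emph{two} time scales $\delta\ll\Delta\ll 1$: it freezes $z$ on blocks of length $\Delta$, forms the fourth-order power variation $X_N=\sum_n|\bb^{\1}_{t_n t_{n+1}}|^4$ on the fine $\delta$-grid inside each block, proves a Gaussian concentration bound for $Y_N=X_N^{1/4}$ via Borell's inequality on the fourth Wiener chaos (after an explicit mean/variance computation using Hermite polynomials), and thereby extracts a bound on a Riemann-sum approximation of $\|z\|_{L^1}$; only then does it feed this into the interpolation inequality. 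You instead use a \emph{single} scale $h$ and a pathwise small-ball argument: for every $s$ you locate a $t\in[s,s+h]$ with $|\bb^{\1}_{st}|\ge h^{H}\ep^{c_0}$, which directly yields an $\|z\|_{\infty}$ bound on the good event, and then interpolate. Your approach avoids the two-scale bookkeeping, the explicit fourth-variation moment computation, and Borell's inequality, trading them for the (well-known) exponential small-ball estimate for fBm plus a covering argument over the starting point $s$; this is in spirit the ``true roughness'' mechanism of \cite{HP}, whereas the paper is closer to the power-variation route of \cite{BH}. Both arguments implicitly rely on $2\ga-H>0$, which is automatic in the paper's regime $1/3<\ga<H<1/2$, and both leave the passage from $d=1$ to general $d$ as a routine extension (in your case one discretises the unit sphere in $\R^d$ in addition to the time grid, so as to lower-bound $|\langle z_s/|z_s|,\bb^{\1}_{st}\rangle|$ rather than $|\bb^{\1}_{st}|$).
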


\begin{proof}
In order to avoid cumbersome indices, we shall prove our result in the
case of 1-dimensional processes. Generalization to the multidimensional setting is
a matter of trivial considerations. We also work on the interval $\ou$ instead of
$\ott$ for the  sake of notational simplicity. As a last preliminary observation, note that if $z$ admits
the decomposition (\ref{eq:dcp-z-norris}), then according to (\ref{dcp:mdx}) we have
\begin{equation*}
\der y= z \, \bb^{\1} + \zeta \, \bb^{\2} + y^{\sharp},
\quad\mbox{where}\quad
y^{\sharp}=\laa\lp r \, \bb^{\1} + \der\zeta \, \bb^{\2} \rp.
\end{equation*}

\smallskip

Similarly to what is done in \cite{BH}, we consider two time scales $\delta\ll \Delta\ll 1$. We assume moreover that $\De/\de=r$ with $r\in\N$. We use a partition $\{t_n;\, n\le 1/\delta\}$ of $[0,1]$ with $t_n=n\de$, so that $t_{Nr}=N\De$. Some increments below will then be frozen on the time scale $\De$, in order to take advantage of some averaging properties of the process $B$.

\smallskip

\noindent
\textit{Step1: Coarse graining on increments.}
Consider then $n$ such that $(N-1)r\le n\le Nr-1$, so that $(N-1)\De\le t_n\le N\De -1$. According to (\ref{dcp:mdx}), we have
\begin{eqnarray*}
\der y_{t_n t_{n+1}}&=&z_{t_n} \, \bb^{\1}_{t_n t_{n+1}}
+ \zeta_{t_n} \, \bb^{\2}_{t_n t_{n+1}}
+ y^{\sharp}_{t_n t_{n+1}}\\
&=&z_{N\De} \, \bb^{\1}_{t_n t_{n+1}} - \der z_{t_n,N\De}\, \bb^{\1}_{t_n t_{n+1}}
+ \zeta_{t_n} \, \bb^{\2}_{t_n t_{n+1}}
+ y^{\sharp}_{t_n t_{n+1}},
\end{eqnarray*}
where $y^{\sharp}$ is an increment in $\cac_2^{3\ga}$. Thus
\begin{equation}\label{eq:exp-prd-z-x1}
z_{N\De} \, \bb^{\1}_{t_n t_{n+1}}=
\der y_{t_n t_{n+1}}+\der z_{t_n,N\De}\, \bb^{\1}_{t_n t_{n+1}}
- \zeta_{t_n} \, \bb^{\2}_{t_n t_{n+1}}
- y^{\sharp}_{t_n t_{n+1}}.
\end{equation}
Set now $X_N=\sum_{n=(N-1)r}^{Nr-1} \| \bb^{\1}_{t_n t_{n+1}} \|^4$ and $Y_N=X_N^{1/4}$. Then
\begin{equation*}
|z_{N\De}|^{4} |X_N|
= \sum_{n=(N-1)r}^{Nr-1} \lln z_{N\De} \, \bb^{\1}_{t_n t_{n+1}} \rrn^4.
\end{equation*}
Furthermore, invoking relation (\ref{eq:exp-prd-z-x1}), we get
\begin{equation*}
|z_{N\De} \, \bb^{\1}_{t_n t_{n+1}}| \le
 \|y\|_{\ga} \de^{\ga} + \|z\|_{\ga} \|B\|_{\ga} \de^{\ga} \De^{\ga}
+ \|\zeta\|_{\infty} \|\bb^{\2}\|_{2\ga} \de^{2\ga} + \|y^{\sharp}\|_{3\ga} \de^{3\ga}.
\end{equation*}
Raising this inequality to power 4 and summing over $(N-1)r\le n\le Nr-1$, we obtain
\begin{equation*}
|z_{N\De}^{4} \, X_N| \le
\de^{4\ga-1} \De
\lp \|y\|_{\ga}  + \|z\|_{\ga} \|B\|_{\ga} \De^{\ga}
+ \|\zeta\|_{\infty} \|\bb^{\2}\|_{2\ga} \de^{\ga} + \|y^{\sharp}\|_{3\ga} \de^{2\ga}
\rp^{4},
\end{equation*}
and therefore
\begin{equation*}
|z_{N\De}| Y_N \le  \de^{\ga-1/4} \De^{1/4}
\lp
\|y\|_{\ga}  + \|z\|_{\ga} \|B\|_{\ga} \De^{\ga}
+ \|\zeta\|_{\infty} \|\bb^{\2}\|_{2\ga} \de^{\ga} + \|y^{\sharp}\|_{3\ga} \de^{2\ga}
\rp.
\end{equation*}
Sum now over $N$ (recall that $1\le N\le 1/\De$) in order to get
\begin{equation}\label{eq:zN-YN}
\sum_{N=1}^{1/\De}|z_{N\De}| Y_N
\le  \de^{\ga-1/4} \De^{-3/4}
\lp
\|y\|_{\ga}  + \|z\|_{\ga} \|B\|_{\ga} \De^{\ga}
+ \|\zeta\|_{\infty} \|\bb^{\2}\|_{2\ga} \de^{\ga} + \|y^{\sharp}\|_{3\ga} \de^{2\ga}
\rp.
\end{equation}

\smallskip

\noindent
\textit{Step 2:  Behavior of a 4\textsuperscript{th} order variation.}
Throughout the proof, we shall use the notations $\lesssim,\asymp$ given in the introduction.
For $K\ge 1$, set  $\tilde X_K=\sum_{n=1}^{K} | \bb^{\1}_{t_n t_{n+1}} |^4$. We shall prove that
\begin{equation}\label{eq:bnd-mom-tilde-XK}
\be[\tilde X_K] \asymp K\de^{4H},
\quad\mbox{and}\quad
\var(\tilde X_K)\lesssim K\de^{8H}.
\end{equation}

\smallskip

Indeed, a simple scaling argument shows that $\tilde X_K\stackrel{(\cl)}{=}\de^{4H}\, \hat X_K$, with $\hat X_K=\sum_{n=1}^{K} | \bb^{\1}_{n,n+1} |^4$. Introduce now the k-th Hermite polynomial $H_k$ (see \cite{N-bk} for a definition and properties of these objects), and notice in particular that $H_2(x)=x^2-1$ and $H_4(x)=x^4-6x^2+3$. This enables to decompose $\hat X_K$ as 
\begin{equation}\label{eq:dcp-hat-XK}
\hat X_K= \sum_{n=1}^{K} \lc H_4(\bb^{\1}_{n,n+1}) +  6 H_2(\bb^{\1}_{n,n+1}) \rc + 3K.
\end{equation}

\smallskip

Recall now that for a centered Gaussian vector $(U,V)$ in $\R^2$ such that $\be[U^2]=\be[V^2]=1$ we have
\begin{equation}\label{eq:covariance-hermite}
\be[H_k(U)]=0, 
\quad\mbox{and}\quad
\be \lc H_k(U) \, H_l(V) \rc = k! \lp  \be[U\, V]\rp^k \1_{(k=l)}.
\end{equation}
Plugging this identity in \eref{eq:dcp-hat-XK}, this immediately yields $\be[\hat X_K]=3K$, which is our first assertion in \eref{eq:bnd-mom-tilde-XK}. In addition, the second part of \eref{eq:covariance-hermite} entails
\begin{equation*}
\var(\hat X_K)=2 \, \sum_{n_1,n_2=1}^{K} \lp 12 \, \al_{n_1,n_2}^4 + \al_{n_1,n_2}^2 \rp:= 2S_K,
\end{equation*}
where we have set 
\begin{equation*}
\al_{n_1,n_2}=\be[\bb^{\1}_{n_1,n_1+1} \, \bb^{\1}_{n_2,n_2+1}]
=\frac12 \lc |n_2-n_1+1|^{2H} + |n_2-n_1-1|^{2H} -2  |n_2-n_1|^{2H} \rc.
\end{equation*}
Summarizing, we have obtained that
\begin{equation}\label{eq:var-XK-2}
\var(\tilde X_K)= \de^{8H} \, \var(\hat X_K) = 2\,  \de^{8H} \, S_K.
\end{equation}
We will now prove that $S_K\lesssim K$. Indeed, write first
\begin{equation*}
S_K=\sum_{1\le n_1 \le K} \lp 12 \, \al_{n_1,n_1}^{4} +  \al_{n_1,n_1}^{2} \rp
+2 \sum_{1\le n_1<n_2 \le K} \lp 12 \,  \al_{n_1,n_2}^{4} +  \al_{n_1,n_2}^{2} \rp
:= S_K^{1}+2 S_K^{2}.
\end{equation*}
Then, since  $\al_{n_1,n_1}=1$, it is readily checked that $S_K^{1}\lesssim K$. Moreover, the term $S_K^{2}$ can be decomposed into
\begin{equation*}
S_K^{2}= \sum_{1\le n_1 \le K-1} \lp 12 \, \al_{n_1,n_1+1}^{4} +  \al_{n_1,n_1+1}^{2} \rp
+ \sum_{1\le n_1,n_2 \le K, \, n_2-n_1\ge 2} \lp 12 \, \al_{n_1,n_2}^{4} +  \al_{n_1,n_2}^{2} \rp
:= S_K^{21}+S_K^{22}.
\end{equation*}
Notice now that $\al_{n_1,n_1+1}=-[1-2^{-2(H-1/2)}]$, which easily yields $S_K^{21}\le c_H K$.

\smallskip

As far as $S_K^{22}$ is concerned, write for $n_2-n_1\ge 2$
\begin{equation*}
\al_{n_1,n_2}=H (2H-1) \int_0^1 dr \int_{-r}^{r}
\lln  (n_2-n_1)+u\rrn^{2H-2} \, du,
\end{equation*}
which immediately yields $\al_{n_1,n_2}\lesssim |(n_2-n_1)-1|^{2H-2}$. Thus
\begin{eqnarray*}
S_K^{22}&\lesssim&
\sum_{n_1=1}^{K-2}\sum_{n_2=n_1+2}^{K} |(n_2-n_1)-1|^{8H-8}+ 3 |(n_2-n_1)-1|^{4H-4} \\
&\lesssim&
\sum_{n_1=1}^{K-2} \sum_{m=1}^{K-n_1+1} m^{4H-4}
\leq K \sum_{m=1}^{\infty} m^{4H-4} \lesssim K,
\end{eqnarray*}
since $\sum_{m=1}^{\infty} m^{4H-4}$ is finite whenever $H<3/4$.

\smallskip

Gathering our bounds on $S_K^{1}, S_K^{21}$ and $S_K^{22}$, we obtain $S_K\lesssim K$, and plugging this bound into (\ref{eq:var-XK-2}), we end up with $\var(\tilde X_K)\lesssim K\,  \de^{8H}$, which is our claim. This finishes the proof of~(\ref{eq:bnd-mom-tilde-XK}).

\smallskip

\noindent
\textit{Step 3: Concentration inequalities for $Y_N$.}
Let us recall that $X_N$ is in the 4\textsuperscript{th} chaos of the fBm $B$. Hence, a result by Borell \cite{Bo} entails
\begin{equation*}
\bp\lp  \frac{|X_N-\be[X_N]|}{\lc\var(X_N)\rc^{1/2}} \ge u\rp
\le c_1 e^{-c_2 u^{1/2}},
\qquad u \ge 0,
\end{equation*}
for two universal constant $c_1,c_2>0$. With (\ref{eq:bnd-mom-tilde-XK}) in hand, this yields
\begin{equation}\label{eq:concentration-XN}
\bp\lp  |X_N-3\De \de^{4H-1}| \ge \De^{1/2} \de^{4H-1/2} u\rp
\le c_1 e^{-c_2 u^{1/2}},
\qquad u \ge 0.
\end{equation}

\smallskip

We now wish to produce a concentration inequality for $Y_N=X_N^{1/4}$. Since $X_N$ is a small random quantity of order $\De \de^{4H-1}$, let us use the inequality
$$
|b^{1/4}-a^{1/4}| \lesssim \xi^{-3/4} |b-a|,
\quad\mbox{where}\quad
\xi\in(a\wedge b,\, a\vee b).
$$
Apply this with $\xi=\frac32 \De \de^{4H-1}$ in order to get
\begin{equation}\label{eq:ineq-proba-YN}
\bp\lp  |Y_N-3^{1/4}\De^{1/4} \de^{H-1/4}| \ge c \, \De^{-3/4} \de^{-3(4H-1)/4}\De^{1/2} \de^{4H-1/2} u\rp
\le A_1+A_2,
\end{equation}
with
\begin{eqnarray*}
A_1&=&
\bp\lp  |X_N-3\De \de^{4H-1}| \ge c\, \De^{1/2} \de^{4H-1/2} u, \, X_N\ge \frac32 \De \de^{4H-1}\rp \\
A_2&=&
\bp\lp   X_N\le \frac32 \De \de^{4H-1}\rp.
\end{eqnarray*}
Furthermore, a straightforward application of (\ref{eq:concentration-XN}) gives
\begin{equation*}
A_1\le c_1 e^{-c_2 u^{1/2}},
\quad\mbox{and}\quad
A_2\le c_1 e^{-c_2 (\De/\de)^{1/4}}.
\end{equation*}
Plugging these inequalities into (\ref{eq:ineq-proba-YN}), we end up with the following concentration inequality for $Y_N$.
\begin{equation}\label{eq:concentration-YN}
\bp\lp  |Y_N-3^{1/4}\De^{1/4} \de^{H-1/4}| \ge c \, \De^{-1/4} \de^{H+1/4} u\rp
\le c_1 e^{-c_2 u^{1/2}} + c_1 e^{-c_2 (\De/\de)^{1/4}}.
\end{equation}
We shall thus retain the fact that $Y_N$ i a random quantity of order $3^{1/4}\De^{1/4} \de^{H-1/4}$, with fluctuations of order $\De^{-1/4} \de^{H+1/4}$:
\begin{equation}\label{eq:approx-YN}
\lln  Y_N-3^{1/4}\De^{1/4} \de^{H-1/4}\rrn \approx \de^{H+1/4} \De^{-1/4}.
\end{equation}

\smallskip

\noindent
\textit{Step 4: Use of the interpolation inequality.}
Start again from equation (\ref{eq:zN-YN}). One would like to have an approximation of the  $L^1$ norm of $z$ appearing on the left hand side of this inequality, that is one would like to replace $Y_N$ by $\De$. To this purpose, replace first $Y_N$ by its approximation $\De^{1/4} \de^{H-1/4}$ from the last step. This yields an inequality of the form:
\begin{multline*}
\De^{1/4} \de^{H-1/4} \sum_{N=1}^{1/\De}|z_{N\De}|
\le \|z\|_{\infty} \sum_{N=1}^{1/\De} \lln  Y_N- 3^{1/4}\De^{1/4} \de^{H-1/4}\rrn \\
+ \de^{\ga-1/4} \De^{-3/4}
\Big(
\|y\|_{\ga} + \|z\|_{\ga} \|B\|_{\ga}  \De^{\ga}
+ \|\zeta\|_{\infty} \|\bb^{\2}\|_{2\ga} \de^{\ga}
+ \|y^{\sharp}\|_{3\ga} \de^{2\ga}
\Big).
\end{multline*}
Rescale this inequality in order to get $\De$ multiplying on the left hand side, which gives:
\begin{multline}\label{eq:z-approx-L1}
\De  \sum_{N=1}^{1/\De}|z_{N\De}|  \\
\le \|z\|_{\infty} \, R_{\de,\De}
+ \de^{-(H-\ga)}
\Big(
\|y\|_{\ga} + \|z\|_{\ga} \|B\|_{\ga}  \De^{\ga}
+ \|\zeta\|_{\infty} \|\bb^{\2}\|_{2\ga} \de^{\ga}
+ \|y^{\sharp}\|_{3\ga} \de^{2\ga}
\Big),
\end{multline}
where
\begin{equation}\label{eq:exp-R-delta}
R_{\de,\De}:=\de^{-(H-1/4)} \De^{3/4}\sum_{N=1}^{1/\De} |  Y_N- 3^{1/4}\De^{1/4} \de^{H-1/4}|.
\end{equation}
Furthermore, it is well known that $|\De  \sum_{N=1}^{1/\De}|z_{N\De}| -\| z \|_{L^1}|\le \|z\|_{\ga} \De^{\ga}$, so that we can recast (\ref{eq:z-approx-L1}) into
\begin{multline}\label{eq:z-L1}
\| z \|_{L^1}
\le \|z\|_{\ga} \De^{\ga} + \|z\|_{\infty} \, R_{\de,\De} \\
+ \de^{-(H-\ga)}
\Big(
\|y\|_{\ga} + \|z\|_{\ga} \|B\|_{\ga}  \De^{\ga}
+ \|\zeta\|_{\infty} \|\bb^{\2}\|_{2\ga} \de^{\ga}
+ \|y^{\sharp}\|_{3\ga} \de^{2\ga}
\Big).
\end{multline}

\smallskip

Recall that we take $0<\al<\ga<H$ and set $\nu_H:=1/(\ga-\al)$. According to (\ref{eq:interpol-ineq}) we have
\begin{equation*}
\|z\|_{\al,\infty} \lesssim
\eta \|z\|_{\ga,\infty}+\eta^{-\nu_H}\|z\|_{L_1},
\end{equation*}
for any (small enough) constant $\eta$, and plugging (\ref{eq:z-L1}) into this last relation we obtain
\begin{multline*}
\|z\|_{\al,\infty} \lesssim
\eta^{-\nu_H} \De^{\ga} \|z\|_{\ga,\infty}  + \eta^{-\nu_H} \|z\|_{\infty} \, R_{\de,\De}
+\eta \|z\|_{\ga,\infty} \\
+\eta^{-\nu_H}
\Big(
\|y\|_{\ga,\infty} \de^{-(H-\ga)} + \|z\|_{\ga,\infty} \|B\|_{\ga} \de^{-(H-\ga)} \De^{\ga}
+ \|\zeta\|_{\infty} \|\bb^{\2}\|_{2\ga} \de^{2\ga-H}
+ \|y^{\sharp}\|_{3\ga} \de^{3\ga-H}
\Big).
\end{multline*}
Defining $\he$ as $\he=H-\ga$, we get
\begin{multline}\label{eq:ineq-z-al-1}
\|z\|_{\al,\infty} \lesssim
\eta^{-\nu_H} \De^{\ga} \|z\|_{\ga,\infty}  + \eta^{-\nu_H} \|z\|_{\infty} \, R_{\de,\De}
+\eta \|z\|_{\ga,\infty} \\
+\eta^{-\nu_H}
\Big(
\|y\|_{\ga,\infty} \de^{-\he} + \|z\|_{\ga,\infty} \|B\|_{\ga} \de^{-\he} \De^{\ga}
+ \|\zeta\|_{\infty} \|\bb^{\2}\|_{2\ga} \de^{\ga-\he}
+ \|y^{\sharp}\|_{3\ga} \de^{2 \ga-\he}
\Big).
\end{multline}

\smallskip

\noindent
\textit{Step 4: Tuning the parameters.}
Recall that we have chosen $0\ll\de\ll\De\ll 1$. We express this fact in terms of powers of $\ep$, by taking $\de=\ep^{\mu}$ and $\De=\ep^{\la}$, with $\mu>\la>0$. We also choose $\eta$ of the form $\eta=\ep^{\tau/\nu_H}$. We shall now see how to choose $\la,\mu,\tau$ conveniently: write (\ref{eq:ineq-z-al-1}) as
\begin{multline}\label{eq:ineq-z-al-2}
\|z\|_{\al,\infty} \lesssim
\varepsilon^{-\tau+\la\ga}  \|z\|_{\ga,\infty}  + \varepsilon^{-\tau} \|z\|_{\infty} \, R_{\de,\De}
+\varepsilon^{\tau/\nu_H} \|z\|_{\ga,\infty}
+
\|y\|_{\ga,\infty} \ep^{-\tau-\he\mu} \\
+ \|z\|_{\ga,\infty} \|B\|_{\ga} \ep^{-\tau-\he\mu+\la\ga}
+ \|\zeta\|_{\infty} \|\bb^{\2}\|_{2\ga} \ep^{-\tau+\mu(\ga-\he)}
+ \|y^{\sharp}\|_{3\ga}  \ep^{-\tau+\mu(2\ga-\he)}.
\end{multline}
In order to be able to bound $z$ when $y$ is assumed to satisfy $\|y\|_{\ga,\infty}\le\ep$, the coefficients in the right hand side of (\ref{eq:ineq-z-al-2}) should fulfill the following conditions:
\begin{itemize}
\item
The coefficient in front of $\|y\|_{\ga,\infty}$ should be smaller than $\ep^{-1}$.
\item
The other coefficients should be $\ll1$.
\end{itemize}
Looking at the exponents in (\ref{eq:ineq-z-al-2}), assuming that $\he$ is arbitrarily small  and letting for the moment $R_{\de,\De}$ apart, this imposes the following relations:
\begin{equation}\label{eq:cdt-lambda-gamma-tau}
\la\ga>\tau,\quad\mbox{and}\quad 0<\tau<1.
\end{equation}

\smallskip

Let us go back now to the evaluation of $R_{\de,\De}$,  
given by expression~(\ref{eq:exp-R-delta}), with the order of magnitude of  $| Y_N- \De^{1/4} \de^{H-1/4}|$ given by (\ref{eq:approx-YN}). Therefore
\begin{equation*}
R_{\de,\De}\approx \de^{-(H-1/4)} \De^{3/4} \De^{-1} \de^{H+1/8} \De^{-1/4}
=\de^{1/2} \De^{-1/2}.
\end{equation*}
Expressing this in terms of powers of $\ep$, we end up with
\begin{equation*}
\eta^{-\nu_H}  \, R_{\de,\De}\approx \ep^{\ka},
\quad\mbox{with}\quad
\ka=\frac{\mu-\la}{2} -\tau.
\end{equation*}
If we wish this remainder term to be small, this adds the condition
\begin{equation}\label{eq:cdt-mu-la-tau}
\mu-\la>2\tau,
\end{equation}
which can be fulfilled easily. From now on, we shall assume that both (\ref{eq:cdt-lambda-gamma-tau}) and (\ref{eq:cdt-mu-la-tau}) are satisfied.

\smallskip

\noindent
\textit{Step 5: Conclusion.}
Recall that we wish to study the probability $\bp(  \|y\|_{\ga,\infty} < \ep,$ and $\|z\|_{\al,\infty}>   \varepsilon^q )$. This quantity is obviously bounded by $B_1+B_2$, where
\begin{eqnarray*}
B_1&=&
\bp\lp  \|y\|_{\ga,\infty} < \ep, \,
\|z\|_{\al,\infty} > \ep^q, \,
|R_{\de,\De}|\le c\, \De^{-1/4} \de^{H+1/4} (\De/\de)^{\tilde\ep}
\rp \\
B_2&=&
\bp\lp
|R_{\de,\De}|\ge c\,\De^{-1/4} \de^{H+1/4} (\De/\de)^{\tilde\ep}
\rp,
\end{eqnarray*}
where $\tilde\ep$ is an arbitrary small positive constant. Furthermore, inequalities (\ref{eq:concentration-YN}) and (\ref{eq:exp-R-delta}) yield, for any $p\ge 1$,
\begin{equation*}
B_2\le c_1 e^{-c_2 (\De/\de)^{\tilde\ep/2}} + c_1 e^{-c_2 (\De/\de)^{1/4}}
\le c_{p,\la,\mu,\tilde\ep} \, \ep^p,
\end{equation*}
where we have used the fact that $\de/\De=\ep^{\mu-\la}$.

\smallskip

We can now bound $B_1$: notice that according to (\ref{eq:ineq-z-al-2}), if we are working on
$$
\Big( |R_{\de,\De}|\le c\, \De^{-1/4} \de^{H+1/4} (\De/\de)^{\tilde\ep} \Big)
\cap
\Big( \|y\|_{\ga,\infty} < \ep  \Big),
$$
then there exists a $\rho>0$ such that
\begin{equation*}
\|z\|_{\al,\infty} \lesssim \ep^{\rho}
\lc 1+ \cn^{2}[z;\cq_\ga^{B}(\R^m)] +\|B\|_{\ga}^{2} + \|\bb^{\2}\|_{2\ga}^{2}\rc.
\end{equation*}
Moreover, recall that $\cn[z;\cq_\ga^{B}(\R^m)]$ is assumed to be an  $L^r$ random variable for all $r\ge 1$, while $\|B\|_{\ga}$ and $\|\bb^{\2}\|_{2\ga}$ are also elements of $L^r$, since they can be bounded by a finite chaos random variable. Thus Tchebychev inequality can be applied here, which entails
$$
B_1\lesssim
\lp  1+\be\lc \cn^{2l}[z;\cq_\ga^{B}(\R^m)] +\|B\|_{\ga}^{2l} + \|\bb^{\2}\|_{2\ga}^{2l}\rc \rp
\ep^{l(\rho-q)},
$$
for an arbitrary $l\ge 1$. It is now sufficient to choose $q<\rho$ and $l$ large enough so that $l(\rho-q)=p$ to conclude the proof, by putting together our bounds on $B_1$ and $B_2$.

\end{proof}

\section{Malliavin calculus for solutions to fractional SDEs}
\label{sec:malliavin-calculus}
This section is the core of our paper, where we derive smoothness of density
for the solution to (\ref{eq:fractional-sde}). We first recall some classical
notions on representations of solutions to SDEs, and then move to
Malliavin calculus considerations.

\subsection{Representation of solutions to SDEs}

The first representations results for solutions to SDEs in terms of the driving
vector fields can be traced back to the seminal work of Chen \cite{Ch}.
They have then been deeply analyzed in \cite{hu,St}, and also lye at the basis
of the rough path theory \cite{LQ}. We have chosen here to present these formulas
according to~\cite{Ba}, which is a recent and didactically useful account on the topic.

\smallskip

Recall that we are considering a $d$-dimensional fBm $B$ with $1/3<H<1/2$. According to Section \ref{sec:fbm}, this allows to construct some increments $\bb^{\bk}$ out of $B$ which can be seen as limits of Riemann iterated integrals over the simplex $\cs_k([s,t])$, as recalled at Proposition~\ref{prop:limit-iterated-intg}. Furthermore, one can solve equation (\ref{eq:fractional-sde}) under the conditions of Theorem~\ref{thm:Lip}.

\smallskip

Let us introduce some additional notation: let $\cv$ be the space of smooth bounded vector fields over $\R^m$. If $V\in\cv$, the vector field $\exp(V)\in\cv$ is defined by the relation $[\exp(V)](\xi)=\Psi_1(\xi)$, where $\{\Psi_{t}(\xi);\, t\ge 0\}$ is solution to the ordinary differential equation
\begin{equation}
\partial_{t} \Psi_{t}(\xi)= V\lp \Psi_{t}(\xi) \rp,
\quad \Psi_{0}(\xi)=\xi.
\end{equation}
The aim of Chen-Strichartz formula is to express the solution $y_t$ to equation (\ref{eq:fractional-sde}), for an arbitrary $t\in\ott$, as $y_t=[\exp(Z_t)](a)$ for a certain $Z_t\in\cv$.

\smallskip

To this purpose, let us give some more classical notations on vector fields: if $V,W\in\cv$, then the vector field $[V,W]\in\cv$ (called Lie bracket of $V$ and $W$) is defined by
\begin{equation*}
[V,W]^{i}=V^{l} \partial_{x_l} W^{i} - W^{l} \partial_{x_l} V^{i}.
\end{equation*}
Notice that this notion is usually introduced though the interpretation of $\cv$ as a set of first order differential operators. A Lie bracket of order $k$ can also be defined inductively for $k\ge 2$ by setting
\begin{equation*}
\lc U_1\cdots U_k \rc =
\lc \lc U_1\cdots U_{k-1}\rc,\, U_k \rc,
\end{equation*}
for $U_1\ldots U_k\in\cv$. With this notation in hand, our main assumption on the vector fields
$V_1,\ldots,V_d$ governing equation (\ref{eq:fractional-sde}) is the following:
\begin{hypothesis}\label{hyp:nilpotent-V}
The vector fields $V_1,\ldots,V_d$ are $n$-nilpotent for some  given positive integer $n$.
Namely, for any $(i_1,\ldots,i_n)\in\{1,\ldots,d\}^{n}$, we have $[V_{i_1}\cdots V_{i_n}]=0$.
\end{hypothesis}

\smallskip

We are now ready to state our formulation of Strichartz' identity,
for which we need two last notations: for $k\ge 1$, we call $\mathfrak{S}_k$
the set of permutations of $\{1,\ldots,k\}$. Moreover, for $\si\in\mathfrak{S}_k$,
write $e(\si)$ for the quantity ${\rm Card}(\{j\in\{1,\ldots,k-1\};\, \si(j)>\si(j+1)\})$.
Then the following formula is proven e.g. in~\cite{Ba,hu,St}:
\begin{proposition}\label{prop:strichartz}
Under the hypothesis of Theorem \ref{thm:Lip}, let $y$ be the solution to equation~
(\ref{eq:fractional-sde}). Assume Hypothesis~\ref{hyp:nilpotent-V} holds true, and
consider $t\in\ott$. Then $y_t=[\exp(Z_t)](a)$, where $Z_t$ can be expressed as follows:
\begin{equation*}
Z_t=\sum_{k=1}^{n-1}\sum_{i_1,\ldots,i_k=1}^{d}  \bv_{i_1,\ldots,i_k} \, \psi_{t}^{i_1,\ldots, i_k},
\quad\mbox{with}\quad
\psi_{t}^{i_1,\ldots, i_k} = \sum_{\si\in\mathfrak{S}_k}
\frac{(-1)^{e(\si)}}{k^2 {{k-1}\choose{e(\si)}}} \, \bb_{0t}^{\bk, i_{\tau(1)},\ldots, i_{\tau(k)}},
\end{equation*}
where we have set $\tau=\si^{-1}$ and $\bv_{i_1,\ldots,i_k}=[V_{i_1}\cdots V_{i_k}]$ in the formula above.
\end{proposition}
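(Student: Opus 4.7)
The plan is to establish the identity first for a smooth driving path, then transfer it to the rough setting along the canonical linear approximations provided by Proposition \ref{prop:limit-iterated-intg}.

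\textbf{Step 1 (Smooth case).} For a smooth path $x:[0,T]\to\R^{d}$, the ODE $dy_t=\sum_{i}V_i(y_t)\,dx_t^{i}$ with $y_0=a$ can be analyzed via the logarithm of its flow. Writing the Volterra expansion of the flow on the truncated tensor algebra and then applying the Dynkin--Specht--Wever projection on the free Lie algebra yields the classical Chen--Strichartz identity
\[
y_t=[\exp(Z_t^{x})](a),\qquad
Z_t^{x}=\sum_{k\ge 1}\sum_{i_1,\ldots,i_k=1}^{d} \bv_{i_1,\ldots,i_k}\,\tilde\psi_t^{i_1,\ldots,i_k,x},
\]
where $\tilde\psi$ is defined exactly as in the statement but with $\bb^{\bk,i_{\tau(1)},\ldots,i_{\tau(k)}}_{0t}$ replaced by the iterated Riemann integral of $x$ over the simplex $\cs_{k}([0,t])$. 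This is the content of the cited references \cite{Ba,hu,St}. Invoking Hypothesis \ref{hyp:nilpotent-V}, every bracket $\bv_{i_1,\ldots,i_k}$ with $k\ge n$ vanishes, so the series truncates into the finite sum appearing in the statement.

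\textbf{Step 2 (Approximation).} Let $(\Pi_n)_{n\ge 1}$ be the sequence of partitions produced by Proposition \ref{prop:limit-iterated-intg} and set $B^{n}=B^{\Pi_n}$. Each $B^{n}$ is piecewise linear, hence smooth, so Step 1 applies with $x=B^{n}$ and delivers
\[
y_t^{n}=[\exp(Z_t^{n})](a),
\]
where $Z_t^{n}$ is the finite combination of $\bv_{i_1,\ldots,i_k}$ with coefficients $\psi_t^{i_1,\ldots,i_k}$ computed from the Riemann iterated integrals $\bb^{\bk,n,i_1,\ldots,i_k}_{0t}$ of $B^{n}$.

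\textbf{Step 3 (Passing to the limit).} Two convergences are combined. On the one hand, Proposition \ref{prop:limit-iterated-intg} gives that for every $k\in\{1,\ldots,n-1\}$ and every multi-index, $\bb^{\bk,n,i_1,\ldots,i_k}_{0t}\to \bb^{\bk,i_1,\ldots,i_k}_{0t}$ almost surely. Since the sum defining $Z_t^{n}$ is finite, this yields $Z_t^{n}\to Z_t$ almost surely in the finite-dimensional span of the brackets $\{\bv_{i_1,\ldots,i_k};\,k\le n-1\}$, viewed inside the space of smooth vector fields on $\R^{m}$ (in any reasonable topology, e.g.\ $C^{1}_{\rm loc}$). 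On the other hand, the convergence at levels $k=1$ and $k=2$ of the same proposition, together with the local Lipschitz continuity of the Itô--Lyons solution map stated in Theorem \ref{thm:Lip}, gives $y_t^{n}\to y_t$ almost surely.

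\textbf{Step 4 (Continuity of the exponential map).} Finally, since $V\mapsto[\exp(V)](a)$ is the time-one evaluation at $a$ of the flow of $V$, standard continuous dependence of ODEs on parameters shows that $[\exp(Z_t^{n})](a)\to[\exp(Z_t)](a)$ whenever $Z_t^{n}\to Z_t$ in $C^{1}_{\rm loc}$. Equating the two limits identified in Step 3 produces the desired identity $y_t=[\exp(Z_t)](a)$.

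The only substantive difficulty is Step 1, where the specific combinatorial weights $(-1)^{e(\si)}/(k^{2}\binom{k-1}{e(\sigma)})$ arise from the Dynkin--Specht--Wever projector applied to the logarithm of the truncated signature; rather than reproduce this calculation we rely on the classical references. Once Step 1 is granted, the nilpotency hypothesis is precisely what trivializes the convergence issue and reduces the problem to a finite list of iterated integrals already under control thanks to Proposition \ref{prop:limit-iterated-intg}.
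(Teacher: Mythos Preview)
The paper does not supply its own proof of this proposition: it simply states that the formula ``is proven e.g.\ in \cite{Ba,hu,St}'' and moves on. Your argument is therefore more detailed than what the paper offers, and it is the natural way to justify the statement in the rough-path setting. The cited references establish the Chen--Strichartz identity for smooth (or Stratonovich semimartingale) drivers; your Steps 2--4 provide exactly the bridge needed to transfer that identity to the fBm-driven equation, using the approximation of iterated integrals from Proposition \ref{prop:limit-iterated-intg} and the continuity of the It\^o--Lyons map from Theorem \ref{thm:Lip}. The nilpotency hypothesis is used precisely where you say: to truncate the series and reduce the limit to finitely many iterated integrals, each of which is already controlled.

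One small remark: in Step 4 you invoke $C^{1}_{\rm loc}$ convergence of the vector fields $Z_t^{n}$. Since each $Z_t^{n}$ lies in the fixed finite-dimensional real span of the brackets $\bv_{i_1,\ldots,i_k}$ (which are smooth vector fields independent of $n$), convergence of the scalar coefficients $\psi_t^{i_1,\ldots,i_k,n}$ immediately gives convergence of $Z_t^{n}$ in any $C^{r}_{\rm loc}$ topology, so there is no issue here. You might also note that the flow $s\mapsto[\exp(sZ_t)](a)$ is globally defined on $[0,1]$ because, under the standing assumptions, each bracket $\bv_{i_1,\ldots,i_k}$ is bounded (a straightforward induction from the boundedness of the $V_i$ and their derivatives), hence $Z_t$ is a bounded smooth vector field.
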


\smallskip

As a warmup to the computations below, we prove now that one can extend our inequality (\ref{control-sol-y}) thanks to Strichartz representation, covering the case of unbounded vector fields with bounded derivatives:
\begin{proposition}\label{prop:moments-y-t-nilpotent}
Suppose Hypothesis~\ref{hyp:nilpotent-V} holds true, and that the smooth vector fields $V_i$, $i=1, 2, \cdots, d$  have bounded derivative.   Assume moreover that all the Lie brackets of order
greater or equal to $2$ are bounded vector fields. Then the solution $y$ of equation (\ref{eq:fractional-sde}) admits moments of any order.
Namely, for any $m>1$ and any $T\in (0, \infty)$,
\begin{equation}\label{e.42}
\EE\lc \sup_{0\le t\le T} |y_t|^m\rc <\infty\,.
\end{equation}
\end{proposition}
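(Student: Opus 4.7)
The plan is to exploit the Chen--Strichartz representation (Proposition~\ref{prop:strichartz}) in order to reduce the problem to a random ODE with an explicit, nicely structured vector field, and then combine Gronwall's inequality with classical Gaussian moment estimates.

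Under Hypothesis~\ref{hyp:nilpotent-V}, the formula of Proposition~\ref{prop:strichartz} truncates at order $n-1$, so one may write $y_t = \Psi_1^{(t)}(a)$ where $\Psi^{(t)}$ is the time-one flow of the \emph{finite} random vector field $Z_t = \sum_{i=1}^d B_t^{i}V_i + R_t$, with
\[
R_t := \sum_{k=2}^{n-1}\sum_{i_1,\ldots,i_k=1}^{d}\psi_{t}^{i_1,\ldots,i_k}\,\bv_{i_1,\ldots,i_k}.
\]
The central observation is the asymmetry of these two pieces under our hypotheses. Since each $V_i$ has bounded derivative, the first-order part has linear growth in $x$ with coefficient of order $|B_t|$: indeed $|B_t^{i}V_i(x)| \leq |B_t|(|V_i(0)| + C|x|)$. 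Since the higher brackets $\bv_{i_1,\ldots,i_k}$ for $k\geq 2$ are bounded vector fields, the second part is bounded pointwise in $x$ by $\rho_t := \sum_{k=2}^{n-1}\sum_{i_1,\ldots,i_k}|\psi_{t}^{i_1,\ldots,i_k}|\,\|\bv_{i_1,\ldots,i_k}\|_\infty$. In short,
\[
|Z_t(x)| \leq C_1\bigl(|B_t|(1+|x|) + \rho_t\bigr),
\]
and only $|B_t|$ controls the $x$-growth.

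Applying Gronwall's lemma to the flow ODE $\partial_s \Psi_s^{(t)} = Z_t(\Psi_s^{(t)})$ with $\Psi_0^{(t)} = a$ then yields
\[
|y_t| \leq \bigl(|a| + C_2|B_t| + C_2\rho_t\bigr)\exp(C_2|B_t|).
\]
The decisive feature is that the iterated integrals $\psi_t^{i_1,\ldots,i_k}$ of order $k\geq 2$ appear only in the additive prefactor, whereas the exponent involves only the first-order increment $|B_t|$. Taking $\sup_{t \in \ott}$ and then an $L^m$-norm, Cauchy--Schwarz gives
\[
\EE\!\left[\sup_{0\leq t\leq T}|y_t|^m\right] \lesssim \EE\!\left[\bigl(|a| + \|B\|_\infty + \sup_t \rho_t\bigr)^{2m}\right]^{1/2}\EE\!\left[\exp(2mC_2\|B\|_\infty)\right]^{1/2}.
\]
The exponential factor is finite for every $m$ by Fernique's theorem. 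For the polynomial factor, $\|B\|_\infty$ has Gaussian moments of all orders, and each $\psi_t^{i_1,\ldots,i_k}$ is a linear combination of iterated integrals lying in the $k$-th Wiener chaos (Proposition~\ref{prop:limit-iterated-intg}), so $\sup_t |\psi_t^{i_1,\ldots,i_k}|$ has finite moments of all orders by hypercontractivity on finite chaoses.

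The main (if modest) obstacle is identifying precisely which terms may be routed into the additive piece of the Gronwall estimate and which must remain in the exponent. The nilpotency hypothesis guarantees that $Z_t$ is a finite sum, while the boundedness of higher-order brackets is exactly what allows the iterated integrals of order $\geq 2$ to stay out of the exponential: this is a necessary structural requirement, since random variables in a Wiener chaos of order $k\geq 2$ do not in general possess exponential moments at arbitrary rate.
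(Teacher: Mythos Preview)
Your proposal is correct and follows essentially the same line as the paper: use the Strichartz representation to recast $y_t$ as the time-one flow of a random ODE, split $Z_t$ into the first-order part (linear growth with Gaussian coefficient) and the bounded higher-bracket part, apply Gronwall so that only $|B_t|$ enters the exponent, and conclude via Fernique plus finite-chaos moment bounds. Your write-up is in fact slightly more explicit than the paper's in naming Fernique's theorem and hypercontractivity for the two moment estimates, and in articulating why the higher-order iterated integrals must stay out of the exponential.
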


\begin{proof} 
One can restate Proposition \ref{prop:strichartz} as follows: for any $t\le T$, the random variable $y_t$ can be expressed as $y_t=\phi_1$,  where $\phi_s^t:=\phi_s:\RR^d\rightarrow \RR^d$   satisfies
(for $t$ fixed)
\[
\partial_{s}\phi_s =  \sum_{k=1}^{n-1}
  \sum_{i_1,\ldots,i_k =1}^ d   \psi_{t}^{i_1,\ldots, i_k} \bv_{i_1,\ldots,i_k} (\phi_s)\,, \quad 0\le s\le 1\,,\quad \phi_0=a\,.
\]
Let us separate the first order integrals in this equation, in order to get
\begin{equation}
\partial_{s}\phi_s = \sum_{i=1}^d  V_i(\phi_s) B^{i}_{t}+
 \sum_{k=2}^{n-1}
  \sum_{i_1,\ldots,i_k =1}^ d   \psi_{t}^{i_1,\ldots, i_k} \bv_{i_1,\ldots,i_k} (\phi_s)\,, \quad \phi_0=a\,.
 \label{e.43}
\end{equation}
Since $V_i$, $i=1, 2, \cdots, d$,  have bounded derivatives and since
all the Lie brackets of order greater or equal to $2$ are bounded
we see that
\begin{eqnarray*}
\left|\partial_{s}\phi_s\right|
&\le&  \sum_{i=1}^d  \left|V_i(\phi_s)\right| |B^{i}_{t}|+
 \sum_{k=2}^{n-1}
  \sum_{i_1,\ldots,i_k =1}^ d   |\psi_{t}^{i_1,\ldots, i_k} | |\bv_{i_1,\ldots,i_k} (\phi_s)| \\
 &\le&  c_1 \left| \phi_s \right|   \sum_{i=1}^d  \sup_{0\le t\le T} |B^{i}_{t}|+ c_2
 \sum_{k=1}^{n-1}
  \sum_{i_1,\ldots,i_k =1}^ d   \sup_{0\le t\le T}  |\psi_{t}^{i_1,\ldots, i_k} |\,.
\end{eqnarray*}
Thus by Gronwall's lemma, we have
\begin{eqnarray*}
\left| \phi_s\right|
 &\le& c_2\left(
 \sum_{k=1}^{n-1}
  \sum_{i_1,\ldots,i_k =1}^ d   \sup_{0\le t\le T}  |\psi_{t}^{i_1,\ldots, i_k} | \right) \,
  \exp\left\{  c_1 \sum_{i=1}^d    \sup_{0\le t\le T}  |B^{i}_{t}|\right\}
  \,.
\end{eqnarray*}
This inequality holds true for all  $0\le s\le 1$.  Thus
\begin{eqnarray*}
\sup_{0\le t\le T} \left| y_t \right|
 &\le& c_2\left(
 \sum_{k=1}^{n-1}
  \sum_{i_1,\ldots,i_k =1}^ d   \sup_{0\le t\le T}  |\psi_{t}^{i_1,\ldots, i_k} | \right) \,
  \exp\left\{  c_1 \sum_{i=1}^d    \sup_{0\le t\le T}  |B^{i}_{t}|\right\}
  \,,
\end{eqnarray*}
which implies \eref{e.42}.
\end{proof}

\subsection{Malliavin derivative}
This subsection is devoted to enhance our Proposition \ref{prop:moments-y-t-nilpotent}, and prove that the Malliavin derivative of $y_t$ has also bounded moments of any order in our particular nilpotent situation. Notice once again that the boundedness of moments of the Malliavin derivative is still an open problem for rough differential equations in the general case. We refer to Section \ref{sec:malliavin-derivatives} for notations on Malliavin calculus.
\begin{theorem} \label{t.4.4}  Let the vector fields $V_i$, $1=1, 2, \cdots, d$ be
smooth with all derivatives bounded, satisfying Hypothesis \ref{hyp:nilpotent-V}.   Assume that all the Lie
brackets of order greater or equal to $2$ are  constants.
Then  the Malliavin derivative $y_t$ has moments of any order.
More precisely, for any $q>1$ and $T\in (0, \infty)$,
\begin{equation}
\EE\lc \sup_{  0\le u\le t\le T}  \left|  D_u y_t\right| ^q \rc<\infty\,.
\end{equation}
\end{theorem}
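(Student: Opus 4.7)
The strategy is to exploit the Strichartz representation of Proposition \ref{prop:strichartz} together with the special structural hypothesis that all Lie brackets of order $\geq 2$ are constants. Writing $y_t = \Phi^t_1$, the Strichartz formula realizes $s \mapsto \Phi^t_s$ as the solution, for $s \in [0,1]$, of
\begin{equation*}
\partial_s \Phi^t_s = \sum_{k=1}^{n-1} \sum_{i_1,\ldots,i_k} \psi_t^{i_1,\ldots,i_k}\, \bv_{i_1,\ldots,i_k}(\Phi^t_s), \qquad \Phi^t_0 = a.
\end{equation*}
Under the present hypothesis $\bv_{i_1,\ldots,i_k}$ is a constant vector field for $k \geq 2$, so the equation collapses to
\begin{equation*}
\partial_s \Phi^t_s = \sum_{i=1}^d B_t^i\, V_i(\Phi^t_s) + C_t, \qquad \Phi^t_0 = a,
\end{equation*}
where $C_t := \sum_{k=2}^{n-1}\sum_{\mathbf{i}} \psi_t^{\mathbf{i}}\, \bv_{\mathbf{i}}$ is independent of $\Phi^t$. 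Thus $y_t$ is a smooth function of the finite family of random parameters $\vec\xi_t := (B_t, \psi_t^{(2)},\ldots,\psi_t^{(n-1)})$.

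Since the resulting ODE in $s$ is classical (no rough driver), $D_u$ commutes with $\partial_s$. Setting $\Psi_s^u := D_u \Phi^t_s$, I obtain the linear inhomogeneous equation
\begin{equation*}
\partial_s \Psi^u_s = A_s\, \Psi^u_s + \sum_{i=1}^d V_i(\Phi^t_s)\, D_u B_t^i + D_u C_t, \qquad \Psi^u_0 = 0,
\end{equation*}
with $A_s := \sum_i B_t^i\, (\partial V_i)(\Phi_s^t)$. Since $\partial V_i$ is bounded, $|A_s| \lesssim \sum_i |B_t^i|$, and the resolvent of the homogeneous part is dominated by $\exp(c\,\|B\|_\infty)$, a random variable with moments of every order. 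Variation of constants then yields
\begin{equation*}
|D_u y_t| \lesssim \exp(c\,\|B\|_\infty) \int_0^1 \Bigl( \sum_{i=1}^d |V_i(\Phi_s^t)|\cdot |D_u B_t^i| + |D_u C_t| \Bigr)\, ds.
\end{equation*}
Proposition \ref{prop:moments-y-t-nilpotent}, combined with the same Gronwall argument applied to $\sup_{s \in [0,1],\, t \in [0,T]}|\Phi^t_s|$, provides $L^q$ moments of all orders for $\sup_{s,t}|V_i(\Phi_s^t)|$, since bounded $\partial V_i$ forces at most linear growth of $V_i$.

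The final ingredient is the integrability of $D_u B_t^i$ and $D_u C_t$ uniformly in $(u,t)$. For the fBm itself, $DB_t^i$ is the deterministic element $\1_{[0,t]}^{[i]} \in \cH$, whose pointwise kernel representation is uniformly controlled on $[0,T]^2$. For $C_t$, each $\psi_t^{\mathbf{i}}$ is, by Proposition \ref{prop:limit-iterated-intg}, a linear combination of iterated integrals living in the $k$-th Wiener chaos, so $D_u \psi_t^{\mathbf{i}}$ lies in a finite union of chaoses of order $\leq n-2$ multiplied by explicit deterministic kernels of $u$. Invoking the equivalence of $L^p$-norms on any fixed finite chaos (recalled at the end of Section \ref{sec:malliavin-derivatives}), $\sup_{u,t}|D_u C_t|$ admits moments of every order. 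Combining all factors via H\"older's inequality produces the announced bound uniformly in $(u,t)$. The main technical subtlety is to make rigorous the pointwise-in-$u$ interpretation of $D_u \psi_t^{\mathbf{i}}$ via the representation of $\cH$ as a fractional integral space $\ci_{T^-}^{1/2-H}(L^2)$; modulo this bookkeeping, the remaining estimates are classical consequences of Gronwall, Gaussian concentration and hypercontractivity on finite chaos.
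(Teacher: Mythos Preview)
Your argument is correct and follows essentially the same route as the paper: both proofs differentiate the Strichartz ODE $\partial_s\phi_s=\sum_i B_t^i V_i(\phi_s)+\sum_{k\ge 2}\psi_t^{\mathbf i}\bv_{\mathbf i}(\phi_s)$ in the Malliavin sense, use the constant-bracket hypothesis to kill the term $\nabla\bv_{\mathbf i}\cdot D_u\phi_s$ for $k\ge 2$ (equivalently, your observation that $C_t$ is independent of $\Phi^t_s$), and then close via Gronwall together with the finite-chaos moment bounds for $D_u\psi_t^{\mathbf i}$. The only cosmetic difference is that you phrase the linear step as variation of constants and make the hypercontractivity argument for $\sup_{u,t}|D_u\psi_t^{\mathbf i}|$ explicit, whereas the paper simply invokes Gronwall and ``boundedness of moments for $D_u\psi_t^{i_1,\ldots,i_k}$''.
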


\begin{proof}

Go back to our representation \eref{e.43}, which can easily be differentiated in the Malliavin calculus sense in order to obtain
\begin{eqnarray*}
\partial_{s} D_u \phi_s
&=&
 \sum_{i=1}^d  \nabla V_i(\phi_s)  B^{i}_{t} \, D_u \,  \phi_s +
 \sum_{k=2}^{n-1}
  \sum_{i_1,\ldots,i_k =1}^ d   \psi_{t}^{i_1,\ldots, i_k} \, \nabla\bv_{i_1,\ldots,i_k}  (\phi_s) \, D_u \phi_s\\
 &&\quad +  \sum_{i=1}^d  V_i (\phi_s) \1_{[0,t)}^{[i]}(u)+
 \sum_{k=2}^{n-1}
  \sum_{i_1,\ldots,i_k =1}^ d   D_u \psi_{t}^{i_1,\ldots, i_k} \bv_{i_1,\ldots,i_k}   (\phi_s) \, ,
\end{eqnarray*}
where we have set $\nabla\bv_{i_1,\ldots,i_k}$ for the (matrix valued) gradient of $\bv_{i_1,\ldots,i_k}$, and where we recall that the notation $\1_{[0,t)}^{[i]}$ has been introduced at Section \ref{sec:malliavin-derivatives}.

\smallskip

Since we assume that all the Lie brackets of order
greater or equal to $2$ of the vector fields $V_i$ are constant vector fields, it is easily checked that
\begin{equation}\label{eq:mall-deriv-order2}
\partial_{s} D_u \phi_s
=
 \sum_{i=1}^d  \nabla V_i(\phi_s)  B^{i}_{t} D_u \phi_s  
 +  \sum_{i=1}^d  V_i (\phi_s) \1_{[0,t)}^{[i]}(u)+
 \sum_{k=2}^{n-1}
  \sum_{i_1,\ldots,i_k =1}^ d   D_u \psi_{t}^{i_1,\ldots, i_k} \bv_{i_1,\ldots,i_k}   (\phi_s) \,.
\end{equation}
Therefore there exist two positive constants $c_1,c_2$ such that
\begin{eqnarray*}
\left| \partial_{s} D_u \phi_s\right|
&\le&
 c_1 \sum_{i=1}^d     |B^{i}_{t}| |D_u \phi_s  | \\
 &&\quad +  c_2 \sum_{i=1}^d  [1+ |\phi_s|] \, \1_{[0,t)}^{[i]}(u) +
 \sum_{k=2}^{n-1}
  \sum_{i_1,\ldots,i_k =1}^ d  \left|  D_u \psi_{t}^{i_1,\ldots, i_k}\right|| \bv_{i_1,\ldots,i_k}   (\phi_s)| \,.
\end{eqnarray*}
By Gronwall's lemma we obtain
\begin{multline*}
\sup_{0\le  s\le 1, 0\le u\le t\le T}  \left|  D_u \phi_s\right|
\le c_2
\exp\left\{  c_1 \sum_{i=1}^d     \sup_{0\le t\le T}  |B^{i}_{t}| \right\}
  \times \Bigg\{  \sum_{i=1}^d  \left[ 1+\sup_{0\le   s\le 1} |\phi_s|\right] \\
\times \lp  1+ \sup_{0\le t\le T}
 \sum_{k=2}^{n-1}
  \sum_{i_1,\ldots,i_k =1}^ d  \sup_{0\le  s\le 1, 0\le u\le t\le T}
  \left|  D_u \psi_{t}^{i_1,\ldots, i_k}\right|| \bv_{i_1,\ldots,i_k}   (\phi_s)| \rp \Bigg\} \,.
\end{multline*}
Thus
\begin{multline*}
\sup_{  0\le u\le t\le T}  \left|  D_u y_t\right|
\le c_2
\exp\left\{  c_1 \sum_{i=1}^d     \sup_{0\le t\le T}  |B^{i}_{t}| \right\}  \\
   \times  \Bigg\{  \sum_{i=1}^d  \left[ 1+\sup_{0\le   t\le T} |y_t|\right]
  \lp 1 +
 \sum_{k=2}^{n-1}
  \sum_{i_1,\ldots,i_k =1}^ d  \sup_{0\le  s\le 1, 0\le u\le t\le T}
  \left|  D_u \psi_{t}^{i_1,\ldots, i_k}\right| \rp \Bigg\} \, ,
\end{multline*}
which ends the proof easily by boundedness of moments for $y_t$, $D_u \psi_{t}^{i_1,\ldots, i_k}$ and $B^i_t$.

\end{proof}

\begin{example} A classical  example of nilpotent vector fields in $\RR^3$
is due to Yamato \cite{Ya}.  Let us check that this example fullfils our standing assumptions. Indeed, the example provided in \cite{Ya} is the following:
\[
A_1=0\,,\quad A_2=\frac{\partial } {\partial x_1}+2x_2 \frac{\partial } {\partial x_3}\,,
\quad \hbox{\rm and}\quad
A_3=\frac{\partial } {\partial x_2}-2x_1\frac{\partial } {\partial x_3} \,.
\]
Then
\begin{eqnarray*}
[A_2, A_3]&=&-4\frac{\partial } {\partial x_3}\,,\quad
[[A_2,A_3], A_2]=[[A_2, A_3], A_3]=0\,.
\end{eqnarray*}
It is thus readily checked that the conditions of Theorem \ref{t.4.4} are met for these vector fields. Moreover, in this particular case the solution to equation (\ref{eq:fractional-sde}) is explicit and we have
\begin{equation*}
y_1^1=y_1+B_t^2, \qquad
y_t^2=y_2+B_t^3, \qquad
y_t^3=y_3+2\lp \bb^{\2,32}_{0t}- \bb^{\2,23}_{0t}\rp\, ,
\end{equation*}
if the solution starts from the initial condition $(y_1,y_2,y_3)$.
Interestingly enough, though the solution is explicit here, the smoothness of the density of $y_t$ is not immediate and we recover here the results of \cite{Dr}.
\end{example}

\subsection{Stochastic flows}
The probabilistic proof of the smoothness of density for diffusion processes originally given by Malliavin \cite{Ma} heavily relies on stochastic flows methods and their relationship with stochastic derivatives. We now establish those relations for SDEs driven by a fractional Brownian motion.

\smallskip

To this aim, denote by $y^{s,a}$ the solution to equation \eref{eq:fractional-sde} starting from the initial condition $y_s=a$ at time $s$:
\begin{equation}\label{eq:fractional-sde-s}
dy_t^{s, a} =\sum_{i=1}^{d}V_i(y_t^{s, a}) \, dB_t^{i},
\quad t \in \lc s, T \rc, \qquad  y_s^{s, a}=a\,.
\end{equation}
The above equation gives  rise to a family of smooth nonlinear mappings
$\Phi_{s,t}:\RR^m\rightarrow \RR^m$, $0\le s\le t\le T$,  determined by
$\Phi_{s,t}(a):= y_t^{s, a}$, and the family $\{\Phi_{s,t}; \,\le s\le t\le T\}$  has the following  flow property (we refer e.g. to \cite{FV} for the properties of flows driven by rough paths quoted below):
\[
\Phi_{s, t}=\Phi_{u, t} \circ \Phi_{s,u}\,,  \quad \forall \ 0\le s\le u\le t\le T\,.
\]
Let $J_{s,t}$  denote the gradient of the nonlinear mapping $\Phi_{s,t}$ with respect to the initial condition.
Then the family  $\{J_{s,t}; \,\le s\le t\le T\}$ also satisfies the relation $J_{s,t}=J_{u,t} J_{s,u}$ for  $\ 0\le s\le u\le t\le T$.
In addition, the map $J_{s, t}$ is invertible, and we have $J_{s,t}=J_{0,t}J_{0,s}^{-1}$.
The equation followed by $J_{0,t}$ is obtained by differentiating formally equation \eref{eq:fractional-sde-s} with respect to the initial value $a$, which yields
\[
dJ_{0,t}=\sum_{i=1}^{d} \nabla V_i(y_t) J_{0, t}  \, dB_t^{i}\,, \quad J_{0, 0}=I\, .
\]
By applying the rules of differential calculus for rough paths, we also get that $J_{0,t}^{-1}$ is solution to the following equation:
\begin{equation}\label{eq:}
dJ_{0,t}^{-1}=-\sum_{i=1}^{d} \nabla V_i(y_t) J_{0, t}^{-1}  \, dB_t^{i}\,, \quad J_{0, 0}^{-1}=I\,.
\end{equation}
We have thus ended up with two linear equations for the derivatives of the flow. In our nilpotent case, we are able thus able to bound these derivatives along the same lines as for Theorem \ref{t.4.4}:
\begin{theorem} \label{thm:moments-jacobian} 
Let the vector fields $V_i$, $1=1, 2, \ldots, d$ be
smooth with all derivatives bounded, satisfying Hypothesis \ref{hyp:nilpotent-V}.   Assume that all the Lie
brackets of order greater or equal to $2$ are  constant.
Then  the Jacobian $J_{0, t}$ and its inverse $J_{0, t}^{-1}$ have moments of any order: for any $q\ge 1$ and $T\in (0, \infty)$,
\begin{equation}\label{e.46}
\EE\lc\sup_{  0 \le t\le T}  \left|  J_{0, t} \right| ^q\rc<\infty ,
\quad\mbox{and}\quad
\EE\lc \sup_{  0 \le t\le T}  \left|  J_{0, t}^{-1}  \right| ^q\rc<\infty\,.
\end{equation}
\end{theorem}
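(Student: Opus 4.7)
The plan is to adapt the argument of Theorem \ref{t.4.4}, this time differentiating the Strichartz representation with respect to the initial condition rather than the Brownian path. Recall from Proposition \ref{prop:strichartz} that for each fixed $t\in\ott$ one has $y_t=\phi_1^a$, where $\phi_s^a$ is determined by the ODE
\[
\partial_s \phi_s^a = \sum_{i=1}^d V_i(\phi_s^a)\, B^i_t + \sum_{k=2}^{n-1} \sum_{i_1,\ldots,i_k=1}^d \psi_t^{i_1,\ldots,i_k}\, \bv_{i_1,\ldots,i_k}(\phi_s^a), \qquad \phi_0^a=a.
\]
Since the $V_i$ are smooth, $a\mapsto \phi_s^a$ is smooth and differentiating the identity $y_t=\phi_1^a$ at the actual initial condition identifies $J_{0,t}$ with $\Xi_1$, where $\Xi_s:=\nabla_a \phi_s^a$.

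Differentiating the displayed ODE in $a$ and invoking the crucial hypothesis that every $\bv_{i_1,\ldots,i_k}$ with $k\ge 2$ is a constant vector field (so that $\nabla\bv_{i_1,\ldots,i_k}\equiv 0$), all higher-order terms drop out and $\Xi_s$ satisfies the \emph{linear} matrix ODE
\[
\partial_s \Xi_s = A_s\, \Xi_s, \qquad \Xi_0=I, \quad\text{with}\quad A_s:=\sum_{i=1}^d B^i_t\, \nabla V_i(\phi_s^a).
\]
Since the $\nabla V_i$ are uniformly bounded, Gronwall's lemma yields $\sup_{0\le s\le 1}|\Xi_s|\le \exp\{c_1 \sum_{i=1}^d |B^i_t|\}$, hence
\[
\sup_{0\le t\le T}|J_{0,t}| \le \exp\Big\{c_1 \sum_{i=1}^d \sup_{0\le t\le T}|B^i_t|\Big\},
\]
and Fernique's theorem applied to the right-hand side yields the first bound in \eref{e.46}.

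The estimate on $J_{0,t}^{-1}$ is obtained from the \emph{same} ODE at essentially no additional cost: since $\Xi_s$ is invertible, $\Xi_s^{-1}$ satisfies $\partial_s \Xi_s^{-1}=-\Xi_s^{-1}A_s$ with $\Xi_0^{-1}=I$, and Gronwall gives the same exponential bound on $|\Xi_s^{-1}|$; combined with the identity $J_{0,t}^{-1}=\Xi_1^{-1}$, this yields the second part of \eref{e.46}. The only step that requires genuine care is the vanishing of the higher-order contributions in the $a$-differentiated Strichartz ODE, which rests entirely on the constancy assumption for brackets of order $\ge 2$; once this simplification is in place, the proof reduces to a direct application of Gronwall's lemma followed by Fernique's theorem.
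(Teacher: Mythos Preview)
Your proof is correct and follows essentially the same approach as the paper: represent $J_{0,t}$ via the $a$-gradient of the Strichartz flow, differentiate the auxiliary ODE, use the constancy of higher-order brackets to reduce to a linear matrix ODE driven by the bounded matrices $\nabla V_i$, and conclude by Gronwall plus Gaussian integrability. The only cosmetic difference is that the paper keeps the higher-order terms $\psi_t^{i_1,\ldots,i_k}\nabla\bv_{i_1,\ldots,i_k}$ in the equation and appeals to the method of Proposition~\ref{prop:moments-y-t-nilpotent}, whereas you explicitly observe that $\nabla\bv_{i_1,\ldots,i_k}\equiv 0$ for $k\ge 2$ and drop them at once; your presentation is arguably cleaner, and your equation $\partial_s\Xi_s^{-1}=-\Xi_s^{-1}A_s$ for the inverse has the factors in the correct order.
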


\begin{proof} 
As mentioned in the proof of Proposition \ref{prop:moments-y-t-nilpotent}, one can write $\Phi_{0, t}(a)=\exp(Z_t)(a)=\phi_1(a)$,  where $\phi_1(a)$ satisfies
\eref{e.43}. Thus if we  introduce   $\tilde J_s=\nabla \phi_s$,  then  $J_{0, t}=\tilde J_{1}$ where
  $\tilde J_s $ satisfies
\begin{equation*}
\partial_{s}\tilde J_s = \sum_{i=1}^d  B^{i}_{t} \, \nabla V_i(\phi_s) \, \tilde J_s +
 \sum_{k=2}^{n-1}
  \sum_{i_1,\ldots,i_k =1}^ d   \psi_{t}^{i_1,\ldots, i_k} \, \nabla \bv_{i_1,\ldots,i_k} (\phi_s) \, \tilde J_s\,, \quad \tilde J_0=I\,.
\end{equation*}
The first part of \eref{e.46} is thus proved following the steps of Proposition \ref{prop:moments-y-t-nilpotent}.

\smallskip

As far as the second part of \eref{e.46} is concerned, observe that $J_{0, t}^{-1}=\bar J_1$,  where $\bar J_s$ is inverse of $\tilde J_s$.
It is clear that  $\bar J_s$ satisfies
\begin{equation*}
\partial_{s}\bar J_s = -\sum_{i=1}^d  \nabla V_i(\phi_s) \bar J_s B^{i}_{t}-
 \sum_{k=2}^{n-1}
  \sum_{i_1,\ldots,i_k =1}^ d   \psi_{t}^{i_1,\ldots, i_k} \nabla \bv_{i_1,\ldots,i_k} (\phi_s) \bar J_s\,, \quad \tilde J_0=I\,.
\end{equation*}
Once again, the methodology of Proposition \ref{prop:moments-y-t-nilpotent} easily yields our claim.

\end{proof}

\begin{corollary}\label{cor:bnd-Dy-J-Z-U}
Under the same assumptions as in Theorem \ref{thm:moments-jacobian}, the following holds true:

\smallskip

\noindent\emph{(i)}
For any $0<\ga<H$ and $q\ge 1$ we have
\begin{equation}\label{eq:bnd:jacobian-holder-norm}
\be\lc \|Dy_t\|_{\ga,\infty}^{q} \rc + \be\lc \|J_{0,\cdot}\|_{\ga,\infty}^{q} \rc + \be\lc \|J_{0,\cdot}^{-1}\|_{\ga,\infty}^{q} \rc
<c_{T,q},
\end{equation}
for a finite constant $c_{T,q}$.

\smallskip

\noindent \emph{(ii)}
As a consequence, inequality \eref{eq:bnd:jacobian-holder-norm} also holds true when the $\|\cdot\|_{\ga,\infty}$ norms are replaced by norms in $\ch$, where $\ch$ has been defined at Section \ref{sec:fct-spaces}.

\smallskip

\noindent \emph{(iii)}
For any smooth bounded vector field $U$ on $\R^m$ and $t\in\ott$, set $Z^{U}_t:=\langle J_{0,t}^{-1}U(y_t), \, \eta\rangle$. Then $Z^U$ is a controlled process, and  satisfies the inequality $\be[\cn^q[Z^U;\, \cq^\ga(\R^m)]]\le c_{T,q}$ for any $q\ge 1$ and a finite constant $c_{T,q}$.
\end{corollary}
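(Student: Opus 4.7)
The plan is to exploit the linear RDEs satisfied by $J_{0,\cdot}$, $J_{0,\cdot}^{-1}$ and $Dy_\cdot$, combine them with the pointwise moment bounds already obtained in Theorems \ref{t.4.4} and \ref{thm:moments-jacobian}, and then close the argument for (iii) by the standard stability of controlled paths under products and smooth composition.

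For part (i), I would read off the Gubinelli derivatives directly from the RDEs. For $J_{0,\cdot}$, the equation $dJ_{0,t}=\nabla V_i(y_t) J_{0,t}\,dB^i_t$ makes $J_{0,\cdot}$ a controlled path based on $B$ with Gubinelli derivative $\nabla V_i(y_\cdot) J_{0,\cdot}$; since $\nabla V_i$ and $\nabla^2 V_i$ are bounded by hypothesis, the rough integral estimate of Proposition \ref{intg:mdx} yields a bound of the form $\|J_{0,\cdot}\|_{\ga,\infty} \le \|J_{0,\cdot}\|_\infty\, P(\|\bb^{\1}\|_\ga,\|\bb^{\2}\|_{2\ga})$ for some polynomial $P$, the linearity of the equation being crucial to ensure that only the sup-norm of $J_{0,\cdot}$ (and not a Hölder norm) appears on the right-hand side. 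Since $\|J_{0,\cdot}\|_\infty$ has moments of every order by Theorem \ref{thm:moments-jacobian} and the rough path norms of $B$ live in a finite chaos, the conclusion follows. The same scheme applies verbatim to $J_{0,\cdot}^{-1}$, and to $Dy_\cdot$ after reading off equation (\ref{eq:mall-deriv-order2}), the only novelty there being the harmless source terms involving $V_i(\phi_\cdot)\1_{[0,t)}^{[i]}$ and $D_u\psi^{i_1,\ldots,i_k}_t$, both of which are themselves controlled paths built from $y$ and from iterated integrals of $B$.

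For part (ii), I would invoke the continuous embedding $\cac_1^{1/2-H}([0,T])\hookrightarrow\ch_i$ recalled in Section \ref{sec:fct-spaces}. Since $H>1/3>1/4$, I may fix $\ga\in(1/2-H,H)$, so that $\|\cdot\|_\ch\lesssim \|\cdot\|_{\ga,\infty}$ and the moment estimates transfer directly from part (i). For $Dy_t$ viewed as the $\ch$-valued object $u\mapsto D_u y_t$, the regularity in $u$ is captured by the identity $D_u y_t = J_{0,t}J_{0,u}^{-1}V(y_u)$, a product of three factors each $\ga$-Hölder in $u$ with all moments. For part (iii), $J_{0,\cdot}^{-1}$ and $y$ are controlled paths with Gubinelli derivatives $-\nabla V_i(y_\cdot) J_{0,\cdot}^{-1}$ and $V_i(y_\cdot)$ respectively, so by the chain rule $U(y_\cdot)$ is also controlled with derivative $\nabla U(y_\cdot)V_i(y_\cdot)$; by stability of controlled paths under products and linear contractions, $Z^U$ inherits the controlled structure, and its norm $\cn[Z^U;\cq^\ga(\R^m)]$ is bounded by a polynomial in $\|J_{0,\cdot}^{-1}\|_{\ga,\infty}$, $\|y\|_{\ga,\infty}$, the sup-norms of $U$ and its first two derivatives, $|\eta|$, and the rough path norms of $B$. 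Part (i) together with Propositions \ref{prop:moments-y-t-nilpotent} and \ref{prop:levy-fbm} then yields the desired moment bound.

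The main obstacle will be part (i): unlike the global estimate (\ref{control-sol-y}) of Theorem \ref{thm:Lip}, which is designed for equations with globally bounded vector fields, the coefficients $J\mapsto \nabla V_i(y_\cdot) J$ of the linear RDEs studied here are unbounded in $J$, so the standard polynomial control on the Hölder norm must be replaced by the \emph{conditional} bound sketched above. The workaround is precisely the one already used in the proofs of Theorems \ref{t.4.4} and \ref{thm:moments-jacobian}: one first extracts the sup-norm moment bound from the Strichartz representation, then promotes it to a Hölder bound through the linear structure of the equation and the algebraic integration estimates of Section \ref{sec:alg-integration}.
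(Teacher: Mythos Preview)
Your proposal is correct, but it takes a different route from the paper in parts (i) and (iii).

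For part (i), you work directly with the linear RDEs in the time variable $t$ (e.g.\ $dJ_{0,t}=\nabla V_i(y_t)J_{0,t}\,dB^i_t$), invoke the sup-norm moments from Theorems~\ref{t.4.4} and~\ref{thm:moments-jacobian}, and then promote these to H\"older bounds via a linear-RDE estimate of the shape $\|J_{0,\cdot}\|_{\ga,\infty}\lesssim \|J_{0,\cdot}\|_\infty\,P(\|\bb^{\1}\|_\ga,\|\bb^{\2}\|_{2\ga})$. This is valid, but note that such an estimate is not a direct consequence of Proposition~\ref{intg:mdx}: it requires the standard local contraction argument for linear rough equations (and the polynomial $P$ should also absorb the controlled-path norm of $y$ through the coefficient $\nabla V_i(y_\cdot)$). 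The paper instead stays entirely within the Strichartz ODE picture: since $J_{0,t}=\tilde J_1$ and $D_uy_t=D_u\phi_1$ with $\tilde J_s,\phi_s$ solving classical ODEs in $s\in[0,1]$ whose coefficients are the iterated integrals $\psi_t^{i_1,\ldots,i_k}$ and their Malliavin derivatives, H\"older regularity in $t$ (resp.\ $u$) is inherited from that of $t\mapsto\psi_t$ (resp.\ $u\mapsto D_u\psi_t$) by an ordinary Gronwall argument. This avoids rough-path estimates altogether and is more in line with the proofs of Theorems~\ref{t.4.4} and~\ref{thm:moments-jacobian}.

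For part (iii), you build the controlled structure of $Z^U$ from the product/composition stability of controlled paths applied to $J_{0,\cdot}^{-1}$ and $U(y_\cdot)$. The paper instead derives the equation
\[
Z_t^{U}= \langle \eta, U(a)\rangle+ \sum_{j=1}^{d} \int_0^t Z_s^{[U,V_j]}\, dB_s^j,
\]
which exhibits $Z^U$ as a rough integral with Gubinelli derivative $Z^{[U,V_j]}$. Both arguments give the moment bound, but the paper's formulation is what feeds directly into the iteration of Step~2 in the proof of Theorem~\ref{thm:smooth-density}, where Proposition~\ref{prop:norris-lemma} is applied with $z=Z^{[U,V_j]}$. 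Your part (ii) matches the paper.
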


\begin{proof}
Going back to equation \eref{eq:mall-deriv-order2}, it is readily checked that all the terms $u\mapsto D_u \psi_{t}^{i_1,\ldots, i_k}$ are $\cac_1^\ga$-H\"older continuous on $[0,t]$ for any $\ga<H$, since the elements $\psi_{t}^{i_1,\ldots, i_k}$ are nice multiple integrals with respect to $B$. Moreover, we have
\begin{equation*}
\be\lc \|D \psi_{t}^{i_1,\ldots, i_k}\|_{\ga,\infty}^{q} \rc < \infty,
\end{equation*}
for any $m\ge 1$. This easily yields $\be[ \|D y_{t}^{i_1,\ldots, i_k}\|_{\ga,\infty}^{q} ] < \infty$ by a standard application of Gronwall's lemma, as in the proof of Theorem \ref{t.4.4}.

\smallskip

Our second assertion stems from the fact that one can choose $1/2-H<\ga<H$, since $H>1/3$. For such $\ga$ we have $\cac_1^\ga\subset \ch$, which ends the proof.

\smallskip

Finally, our claim (iii) derives from the fact that the equation governing $Z^{U}$ is of the following form:
\begin{equation}\label{eq:sde-Z-U}
Z_t^{U}= \lla \eta, \, U(a)\rra+ \sum_{j=1}^{d} \iot Z_s^{[U,V_j]} dB_s^j.
\end{equation}
The process $Z^{U}$ can thus be decomposed as a controlled process as in Section \ref{sec:rdes}, and since we already have estimates for $J_{0,t}^{-1}$ and $y_t$, the bound on $\be[\cn^q[Z^U;\, \cq^\ga(\R^m)]]$ follows easily.

\end{proof}

\subsection{Proof of Theorem \ref{thm:smooth-density}}
As mentioned in the introduction, once we have shown the integrability of the Malliavin derivative and proved a Norris type lemma, the proof of our main theorem goes along classical lines. We have chosen to follow here the exposition of~\cite{Ha11}, to which we refer for further details.

\smallskip

\noindent
\textit{Step 1: Reduction to a lower bound on H\"older norms.}
Recall that the process $Z^U$ has been defined for any smooth vector field $U$ in Corollary \ref{cor:bnd-Dy-J-Z-U}. For any $p\ge 1$, our first goal is to reduce our problem to the existence of a constant $c_p$  such that
\begin{equation}\label{eq:low-bnd-C-alpha2}
\bp\lp  \|Z^{V_k}\|_{\al,\infty} \le \ep \rp \le c_p \ep^p,
\end{equation}
for $1\le k\le d$, a given $\al\in(1/3,1/2)$, all $\ep\in(0,1)$ and where we observe that all the norms below are understood as norms on $\ott$.

\smallskip

Indeed, according to \cite[Relation (4.9)]{Ha11} the smoothness of density can be obtained from the estimate 
\begin{equation*}
\bp\lp  \|Z^{V_k}\|_{\ch} \le \ep \rp \le c_p \ep^p,
\end{equation*}
where we recall that $\ch$ has been defined at Section \ref{sec:fct-spaces}. Furthermore, we have
\begin{equation*}
\bp\lp  \|Z^{V_k}\|_{\ch} \le \ep \rp \le
\bp\lp  \|Z^{V_k}\|_{L^2} \le \ep \rp \le 
\bp\lp  \|Z^{V_k}\|_{L^1} \le \ep \rp.
\end{equation*}
It is thus sufficient for our purposes to check 
\begin{equation}\label{eq:low-bnd-L1}
\bp\lp  \|Z^{V_k}\|_{L^1} \le \ep \rp \le c_p \ep^p.
\end{equation}

\smallskip

In order to go from \eref{eq:low-bnd-L1} to \eref{eq:low-bnd-C-alpha2}, let us use our interpolation bound \eref{eq:interpol-ineq} in the following form: for any $0<\eta<1$ and $0<\al<\rho<H$ we have
\begin{equation*}
\|b\|_{L^1} \ge \eta^{1/(\rho-\al)} \lp \|b\|_{\al,\infty}-C_{\al, \rho} \eta \, \|b\|_{\rho,\infty} \rp.
\end{equation*}
Take now $\delta\in(0,1)$ to be fixed later on and  $\eta^{1(\rho-\al)}=\ep^{1-\delta}$, that is $\eta=\ep^{(\rho-\al)(1-\delta)}$. Then
\begin{equation}\label{eq:low-bnd-L1-2}
\bp\lp  \|Z^{V_k}\|_{L^1} \le \ep \rp \le 
\bp\lp  \|Z^{V_k}\|_{\al,\infty} \le 2 \ep^\delta \rp + R,
\quad\mbox{where}\quad
R=\bp\lp \|Z^{V_k}\|_{\rho,\infty} \le  \frac{1}{4c \, \ep^{\nu}}\rp,
\end{equation}
with $\nu=\rho-\al-(1-(\rho-\al))\delta$. Choose now $\delta$ small enough, so that $\nu>0$. Since $\|Z^{V_k}\|_{\rho,\infty}$ admits moments of any order according to Corollary \ref{cor:bnd-Dy-J-Z-U}, it is easily checked that $R$ can be made smaller than any quantity of the form $c_q \ep^{q}$. It is thus sufficient to prove \eref{eq:low-bnd-C-alpha2} in order to get the smoothness of density for $y_t$.

\smallskip

\noindent
\textit{Step 2: An iterative procedure.}
For $l\ge 1$ and $x\in\R^m$, let $\cv_l(x)$ be the vector space generated by the Lie brackets of order $l$ of our vector fields $V_1,\ldots,V_d$ at point $x$: 
\begin{equation*}
\cv_l(x)={\rm Span}\lcl [V_{k_1}\cdots V_{k_j}](x); \, j\le l, \, 1\le k_1,\ldots, k_j\le d \rcl.
\end{equation*}
We assume that the vector fields are $\ell$-hypoelliptic for a given $\ell>0$, which can be read as $\cv_\ell(x)=\R^m$ for any $x\in\R^m$. In order to start our induction procedure, we set $\al_1=\al$, so that we have to prove $\bp(  \|Z^{V_k}\|_{\al_1,\infty} \le \ep ) \le c_p \ep^p$.

\smallskip

Recall that $Z^{V_k}$ satisfies the relation
\begin{equation*}
Z_t^{V_k}= \lla \eta, \, V_k(a)\rra+ \sum_{j=1}^{d} \iot Z_s^{[V_k,V_j]} dB_s^j.
\end{equation*}
Thus Proposition \ref{prop:norris-lemma} asserts that for any $1/3<\al_2<\al_1<H$ there exists $q_2>0$ such that
\begin{equation*}
\bp\lp  \lp \|Z^{V_k}\|_{\al_1,\infty} \le \ep \rp
\cap \lp \cup_{j=1}^{d} \lp \|Z^{[V_{k_1},V_{k_2}]}\|_{\al_2,\infty}> \ep^{q_2} \rp \rp \rp \le c_p \ep^p.
\end{equation*}
Relation \eref{eq:low-bnd-C-alpha2} is thus implied by
\begin{equation*}
\bp\lp  \lp \|Z^{V_k}\|_{\al_1,\infty} \le \ep \rp
\cap \lp \cap_{j=1}^{d} \lp \|Z^{[V_{k_1},V_{k_2}]}\|_{\al_2,\infty}\le \ep^{q_2} \rp \rp \rp \le c_p \ep^p.
\end{equation*}
Iterating this procedure we end up with the following claim to prove: $\cb_{\ell}(\ep)\le c_p \ep^p$ for all $\ep\in(0,1)$, with
\begin{equation*}
\cb_{\ell}(\ep)=
\bp\lp  \|Z^{V_k}\|_{\al_1,\infty} \le \ep, \,  \|Z^{[V_{k_1},V_{k_2}]}\|_{\al_2,\infty}\le \ep^{q_2},
\ldots,  \|Z^{[V_{k_1}\cdots V_{k_\ell}]}\|_{\al_\ell,\infty}\le \ep^{q_\ell}\rp,
\end{equation*}
where the intersection above extends to all possible combinations $1\le k_1,\ldots,k_\ell\le d$, and where $1/3<\al_\ell<\cdots<\al_1<H$.

\smallskip

Going back now to the very definition of  $Z^U$ as $Z^{U}_t=\langle J_{0,t}^{-1}U(y_t), \, \eta\rangle$, it is readily checked that
\begin{equation*}
\cb_{\ell}(\ep) \le 
\bp\lp  \lla \eta, \, V_{k_1}(a) \rra \le \ep, \ldots, \lla \eta, \, [V_{k_1}\cdots V_{k_\ell}](a) \rra \le \ep^{q_\ell}
\rp.
\end{equation*}
Owing to the fact that $\cv_\ell(a)=\R^m$, we thus have $\cb_{\ell}(\ep)=0$ for $\ep$ small enough, which ends the proof.

\medskip

\noindent
\textbf{Acknowledgement:} We would like to thank Ivan Nourdin for pointing us out the use of Hermite polynomials in the computation of 4\textsuperscript{th} order variations (see proof of Proposition~\ref{prop:norris-lemma} Step 2).

\end{document}